\numberwithin{equation}{section}
\numberwithin{figure}{section}
\theoremstyle{plain}
\newtheorem{thm}{\protect\theoremname}
\theoremstyle{plain}
\newtheorem{prop}[thm]{\protect\propositionname}
\theoremstyle{plain}
\newtheorem{lem}[thm]{\protect\lemmaname}
\theoremstyle{remark}
\newtheorem{rem}[thm]{\protect\remarkname}
\global\long\def\Re{\operatorname{Re}}
\global\long\def\Im{\operatorname{Im}}
\global\long\def\Arg{\operatorname{Arg}}
\global\long\def\Log{\operatorname{Log}}
\providecommand{\lemmaname}{Lemma}
\providecommand{\propositionname}{Proposition}
\providecommand{\remarkname}{Remark}
\providecommand{\theoremname}{Theorem}
\begin{document}
\title{On the zeros of certain Sheffer sequences and their cognate sequences}
\author{G.-S. Cheon${}^1$}
\address{${}^1$Department of Mathematics/ Applied Algebra and Optimization Research Center, Sungkyunkwan University, Suwon 16419, Rep. of Korea}
\email{gscheon@skku.edu}
\thanks{ G.-S. Cheon was partially supported by the National Research Foundation of Korea (NRF) grant funded by the Korean government (MSIP) (2016R1A5A1008055 and 2019R1A2C1007518).}
\author{T. Forg\'acs${}^2$}

\address{${}^2$Department of Mathematics, California State University, Fresno, Fresno, CA 93740-8001, USA}
\email{tforgacs@mail.fresnostate.edu}
\email{khangt@mail.fresnostate.edu}
\author{K. Tran${}^2$}

\thanks{The third author thanks the organizers and participants of the workshop on Optimal Point Configurations on Manifolds hosted by the Erwin Schr\"odinger International
Institute for Mathematics and Physics.}

\maketitle
\begin{abstract} Given a Sheffer sequence of polynomials, we introduce the notion of an associated sequence called the cognate sequence. We study the relationship between the zeros of this pair of associated sequences and show that in case of an Appell sequence, as well as a more general family of Sheffer sequences, the zeros of the members of each sequence (for large n) are either real, or lie on a line $\Re z=c$. In addition to finding the zero locus, we also find the limiting probability distribution function of such sequences. \\

\noindent MSC: 05A15, 05A40, 30C15, 30E15\\

\noindent Key words: Sheffer sequence, cognate sequence, zero locus, limiting distribution
\end{abstract}
\section{Introduction}

Sequences of polynomials play a fundamental role in several fields of mathematics, including enumerative combinatorics, functional analysis, applied mathematics, and differential equations. 
Polynomial sequences have been studied extensively from many different points of view \cite{cfkt,Leibman}. Some of the aspects recent research has focused on include their explicit formulas, generating functions, recurrence relations, and zero distributions. 

By a \textit{Sheffer sequence} \cite{Rota, BShap1} we shall mean a polynomial sequence indexed by the nonnegative integers $0,1,2,\ldots$, in which the index of each polynomial equals its degree, satisfying conditions related to the umbral calculus \cite{BRoman} in combinatorics. In this paper, given a Sheffer sequence we introduce the notion of its \textit{cognate sequence}, and study zeros of the cognate sequence of certain Sheffer sequences. This direction of research is largely motivated by polynomial pairs defined using recurrence relations, such as the Lucas polynomial sequences \cite{Cheon, Horadam} for example. \\
A Sheffer sequence $\left\{ G_{n}(s)\right\} _{n=0}^{\infty}$ is characterized by its exponential generating function
\begin{eqnarray*}
\sum_{n=0}^\infty G_n(s){z^n\over n!}=g(z)e^{sf(z)}
\end{eqnarray*}
for some (formal) power series $g$ and $f$ in the variable $z$ satisfying the conditions $g(0)\ne0$, $f(0)=0$ and $f'(0)\ne0$. By convention, we call $\left\{ G_{n}(s)\right\} _{n=0}^{\infty}$ the Sheffer sequence for the pair $(g,f)$. In particular, the Sheffer sequence for a pair $(g,az)$ with a constant $a\ne0$ is called an \textit{Appell sequence}. There are a number of classical polynomial sequences that are Appell sequences, including the Bernoulli polynomials $B_n(s)$ for the pair $({z\over e^z-1},z)$, the Euler polynomials $E_n(s)$ for the pair $({2\over e^z+1},z)$, and the Hermite polynomials $H_n(s)$ for the pair $(e^{-z^2},2z)$.\\
Sheffer sequences form a group called the Sheffer group with the operation of umbral composition, defined as follows (see \cite{Tian}). Suppose
$\left\{ G_{n}(s)\right\} _{n=0}^{\infty}$ and $\left\{ H_{n}(s)\right\} _{n=0}^{\infty}$ are Sheffer sequences for the pairs $(g,f)$ and $(h,\ell)$ respectively, given by
\begin{eqnarray}\label{Sheffer}
G_{n}(s)=\sum_{k=0}^n a_{n,k}s^k\quad{\rm and}\quad H_{n}(s)=\sum_{k=0}^n b_{n,k}s^k.
\end{eqnarray}
Then the umbral composition of $G_{n}(s)$ with $H_{n}(s)$, denoted by $G_n\circ H_n(s)$, is the sequence of polynomials defined by
\begin{eqnarray*}
G_n\circ H_n(s)=\sum_{k=0}^n a_{n,k}H_k(s)=\sum_{0\le\ell\le k\le n} a_{n,k}b_{k,\ell}s^\ell.
\end{eqnarray*}

It is shown in\cite{Tian} that the Sheffer group is isomorphic to the Riordan group of exponential Riordan matrices defined in terms of exponential generating functions as follows.
Let $D=[d_{i,j}]_{i,j\ge0}$ be an infinite lower triangular matrix with complex entries. If there exists a pair of exponential generating functions 
$$
g=\sum_{k\ge0}g_k {z^k\over k!}\;\;{\rm and}\;\;f=\sum_{k\ge1}f_k {z^k\over k!}
$$
with $g_0\ne0$ and $f_1\ne 0$, such that the $k$-th column of $D$ has exponential generating function $gf^k/k!$ for $k=0,1,2,\ldots$, then $D$ is called an {\it exponential Riordan matrix}, and is denoted by $[g,f]$. Let $\mathcal{R}$ be the set of
all exponential Riordan matrices. $\mathcal{R}$ is a group called the {\it (exponential) Riordan group} under usual matrix multiplication. In terms of generating functions the product is expressed by 
\begin{eqnarray}
[g,f][h,\ell]=[gh(f),\ell(f)]
\end{eqnarray}
where $h(f)$ denotes composition of power series with $f(0)=0$.

By definition, we see that the coefficient matrices $[a_{n,k}]$ of $\left\{ G_{n}(s)\right\} _{n=0}^{\infty}$ and $[b_{n,k}]$ of $\left\{ H_{n}(s)\right\} _{n=0}^{\infty}$ in (\ref{Sheffer}) are exponential Riordan matrices $[g,f]$ and $[h,\ell]$ respectively. Moreover, $\left\{ G_n\circ H_n(s)\right\} _{n=0}^{\infty}$ is the Sheffer sequence for the pair $(gh(f),\ell(f))$.

We claim that the Riordan group $\mathcal{R}$ is isomorphic to the group $\mathcal{R}'$ of exponential Riordan matrices of the form $\left[{f^{\prime}/ g},f\right]$. To see this, consider the map $\phi:\mathcal{R}\rightarrow\mathcal{R}'$ given by $\phi([g,f])=\left[f^{\prime}/g,f\right]$. Then
for any $A=[g,f]$ and $B=[h,\ell]$ in $\mathcal{R}$, we have
\begin{align*}
\phi(AB)=\phi([gh(f),\ell(f)])=\left[{f^{\prime}\ell^{\prime}(f)\over
gh(f)},\ell(f)\right]=\left[{f^{\prime}\over g},f\right]\left[{\ell^{\prime}\over
h},\ell\right]=\phi(A)\phi(B).
\end{align*}
Hence $\phi$ is a group homomorphism. In addition, ${\rm
ker}(\phi)=\{(1,z)\}$ and clearly, $\phi$ is onto. Thus, $\phi$ is a group
isomorphism. We may thus associate to the Sheffer sequence $\left\{G_{n}(s)\right\} _{n=0}^{\infty}$ for the pair $(g,f)$, its {\it cognate sequence} $\left\{G_{n}^c(s)\right\} _{n=0}^{\infty}$ generated by the relation
\begin{eqnarray*}
{f'(z)\over g(z)}e^{sf(z)}=\sum_{n\ge0}G_n^c(s){z^n\over n!}.
\end{eqnarray*} 
For each $n$, we call $G_n^c(s)$ the cognate polynomial of $G_n(s)$. It is natural to ask how the zeros of $G_n(s)$ relate to the zeros of the cognate polynomial $G_{n}^c(s)$. After all, the map 
\[
\left\{G_{n}(s)\right\} _{n=0}^{\infty} \stackrel{\Phi}{\longrightarrow} \left\{G_{n}^c(s)\right\} _{n=0}^{\infty}
\]
is a transformation on $\mathbb{R}[x]$, and the properties of such transformations, as they relate to the preservation of zero locus, have been a central problem of study in the context of the P\'olya-Schur program (see \cite{BB}) and beyond.  
In this paper we study a subset of such maps -- or equivalently pairs of Sheffer sequences and their cognate sequence -- which preserve the \textit{symmetry type} of the zero locus of a Sheffer sequence.  \\
The paper is organized as follows. In Section \ref{sec:appellzeros} we discuss Appell sequences and their cognate sequences, building on the example of the Bernoulli polynomials. We also provide a characterization of all Appell sequneces whose zeros exhibit the same type of symmetry as those of the Bernoulli polynomials.  In Section \ref{sec:sheffercognate} we show that the Sheffer sequences considered in \cite{cfkt} along with their cognate sequence are generated by a pair of functions of the same general form. We show that any Sheffer sequence generated by functions of this kind consist of polynomials $H_n$ whose zeros are either real or lie on a line of the form $\Re z=c$ for $n \gg 1$. We accomplish this in two subsections: the first (subsection \ref{subsec:asymptotics}) develops the necessary asymptotic formulas for the integral representation of the polynomials under investigation, while the second (subsection \ref{subsec:zerolocation}) finds the precise location of the zeros of this sequence. The paper concludes with Section \ref{sec:limitdist}, in which we discuss the limiting distribution of the zeros of the family of sequences defined in Theorem \ref{thm:maintheorem}.
\section{The zeros of Appell sequences and their cognate sequences}\label{sec:appellzeros}
We begin our investigations with the zeros of the cognate sequence of the Bernoulli polynomials. By definition, the cognate sequence $\left\{B_{n}^c(s)\right\} _{n=0}^{\infty}$ of the Bernoulli polynomials $\left\{B_{n}(s)\right\} _{n=0}^{\infty}$
is the Appell sequence for the pair $({e^z-1\over z},z)$. It is known that all the zeros of Bernoulli polynomials $B_n(s)$ $(n\ge1)$ are symmetrical with respect to the line $\Re s={1\over2}$.

Our first theorem (c.f. Theorem \ref{Bcog}) demonstrates that the location of the zeros of $B_n^c(s)$ is closely related to that of the zeros of $B_n(s)$. In the proof of this result we need to make use of the following lemma. 

\begin{lem}\label{realzero} \cite{BBump,BTitch}
Let $G(s)\in\mathbb{C}[s]$ be a polynomial all of whose zeros have
positive imaginary part, and let $\bar{G}(s)$ be the polynomial
whose coefficients are the complex conjugates of those of $G(s)$.
Then all zeros of $G(s)+\bar{G}(s)\in\mathbb{R}[s]$ are real.
\end{lem}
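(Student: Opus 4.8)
The plan is to realize $G(s)+\bar G(s)$ as a scalar multiple of a polynomial of the form $H(s)+\overline{H}(s)$ whose factor $H$ has all zeros in a half-plane, so that the lemma applies. Concretely, suppose $G(s) = c\prod_{j}(s-\alpha_j)$ with $\Im \alpha_j > 0$ for every $j$, and write $c = |c|e^{i\theta}$. I would first observe that $\bar G(s) = \bar c\prod_j(s-\bar\alpha_j)$, so that the zeros of $\bar G$ are precisely the reflections $\bar\alpha_j$, all lying in the lower half-plane. The polynomial $P(s):=G(s)+\bar G(s)$ is real: indeed $\overline{P(\bar s)} = \overline{G(\bar s)} + \overline{\bar G(\bar s)} = \bar G(s)+G(s)=P(s)$, where I use that conjugating the coefficients and the argument recovers the original evaluation. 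Hence $P\in\mathbb R[s]$ as claimed in the statement.

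Next I would locate the zeros. A zero $s_0$ of $P$ satisfies $G(s_0) = -\bar G(s_0)$, hence $|G(s_0)| = |\bar G(s_0)|$. Writing this out,
\[
|c|\prod_j |s_0-\alpha_j| = |c|\prod_j |s_0 - \bar\alpha_j|,
\]
so $\prod_j |s_0-\alpha_j| = \prod_j |s_0-\bar\alpha_j|$. The key elementary fact is that for a single point $\alpha$ with $\Im\alpha>0$, one has $|s-\alpha| < |s-\bar\alpha|$ whenever $\Im s > 0$, and the reverse strict inequality whenever $\Im s < 0$ (the point $\alpha$ is strictly closer to $s$ than its mirror image is, precisely when $s$ lies on the same side of the real axis as $\alpha$). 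Multiplying these over $j$, if $\Im s_0>0$ the left product is strictly smaller than the right, and if $\Im s_0<0$ it is strictly larger; either way equality fails. Therefore every zero of $P$ must satisfy $\Im s_0 = 0$, i.e. is real.

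Finally, I should account for multiplicity: the statement asserts \emph{all} zeros are real, which for a polynomial in $\mathbb R[s]$ of degree $n$ means $n$ real zeros counted with multiplicity. The argument above shows the zero set is contained in $\mathbb R$; since $P$ has real coefficients and $\deg P = \deg G = n$ (the leading terms $cs^n$ and $\bar c s^n$ add to $2\Re(c)\,s^n$ — one must check this is nonzero, which is where a genuinely small case-check is needed: if $\Re c = 0$ one instead factors out the unimodular constant differently, replacing $G$ by $e^{-i\theta}G$ whose leading coefficient is real positive, noting this does not change the hypothesis on the zeros), the fundamental theorem of algebra then gives exactly $n$ zeros, all in $\mathbb R$, as needed. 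The only real obstacle is the bookkeeping around the leading coefficient and the degenerate case $\Re c = 0$; the geometric inequality $|s-\alpha|<|s-\bar\alpha|$ for $\Im s>0$ is the heart of the matter and is immediate from $|s-\alpha|^2 - |s-\bar\alpha|^2 = -4\,\Im s\,\Im\alpha$.
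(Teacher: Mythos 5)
The paper offers no proof of this lemma at all --- it is quoted from Bump--Ng and Titchmarsh --- so there is no in-house argument to compare yours against; I can only judge your proposal on its own merits, and it is correct. The core of your argument is the standard one: factor $G(s)=c\prod_j(s-\alpha_j)$ with $\Im\alpha_j>0$, note that a zero $s_0$ of $G+\bar G$ forces $|G(s_0)|=|\bar G(s_0)|$, and then use the identity $|s-\alpha|^2-|s-\bar\alpha|^2=-4\,\Im s\,\Im\alpha$ to see that $\prod_j|s_0-\alpha_j|$ and $\prod_j|s_0-\bar\alpha_j|$ differ strictly whenever $\Im s_0\neq 0$ (the strictness survives the product, including the case where one factor vanishes, since $G$ and $\bar G$ share no zeros). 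That already proves the lemma: every root of $G+\bar G$ in $\mathbb{C}$ is real, which is exactly the assertion. Your closing paragraph about the leading coefficient is the one place you go slightly astray: the lemma does not claim that $G+\bar G$ has degree $n$, only that its zeros are real, so the case $\Re c=0$ (where the degree genuinely drops, e.g.\ $G(s)=i(s-i)$ gives $G+\bar G\equiv 2$) needs no repair --- and the repair you suggest, replacing $G$ by $e^{-i\theta}G$, changes the polynomial $G+\bar G$ and so does not prove the original statement anyway. Delete that digression and the proof stands; note also that this is precisely the degree bookkeeping the paper needs in its application, where $\deg B_n^c=n$ is known independently, so the weaker conclusion suffices there.
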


\begin{thm}\label{Bcog}
For $n\ge1$ let $B_n^c(s)$ be the cognate polynomial of the Bernoulli polynomial $B_n(s)$. Then all zeros of $B_n^c(s)$ lie on the line $\Re s=-{1\over2}$. 
\end{thm}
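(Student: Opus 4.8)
The plan is to first write down an explicit formula for $B_n^c(s)$ and then, after an affine change of variables, read the conclusion off Lemma \ref{realzero}. Since the Bernoulli polynomials form the Appell sequence for $\left(\frac{z}{e^z-1},z\right)$, their cognate sequence is the Appell sequence for $\left(\frac{e^z-1}{z},z\right)$, so that
\[
\sum_{n\ge0}B_n^c(s)\frac{z^n}{n!}=\frac{e^z-1}{z}\,e^{sz}=\frac{e^{(s+1)z}-e^{sz}}{z}.
\]
Expanding the last expression in powers of $z$ (the constant term vanishes) gives $B_n^c(s)=\frac{(s+1)^{n+1}-s^{n+1}}{n+1}=\int_s^{s+1}t^n\,dt$, so it suffices to locate the zeros of $P(s):=(s+1)^{n+1}-s^{n+1}$. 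The two leading terms cancel, so $\deg P=n$, matching $\deg B_n^c=n$, and $P(0)=1\ne0$.

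Next I would translate and rotate so that the conjectured line $\Re s=-\tfrac12$ becomes the real axis. Setting $s=-\tfrac12+iu$ we have $s+1=\tfrac12+iu=i\!\left(u-\tfrac i2\right)$ and $s=-\tfrac12+iu=i\!\left(u+\tfrac i2\right)$, whence
\[
P\!\left(-\tfrac12+iu\right)=i^{n+1}\Bigl[\bigl(u-\tfrac i2\bigr)^{n+1}-\bigl(u+\tfrac i2\bigr)^{n+1}\Bigr]=:i^{n+1}D(u),
\]
and, since $u\mapsto -\tfrac12+iu$ is a bijection of $\mathbb{C}$, the claim reduces to: every zero of $D$ is real.

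To apply Lemma \ref{realzero}, put $G(u):=i\left(u-\tfrac i2\right)^{n+1}$; every zero of $G$ equals $\tfrac i2$, which has positive imaginary part. Conjugating the coefficients of $G$ turns $-\tfrac i2$ into $\tfrac i2$ and $i$ into $-i$, so $\bar G(u)=-i\left(u+\tfrac i2\right)^{n+1}$ and therefore $G(u)+\bar G(u)=i\,D(u)$. By the lemma all zeros of $G+\bar G$, and hence all zeros of $D$, are real; undoing the substitution, every zero of $P$ — equivalently of $B_n^c$ — lies on $\Re s=-\tfrac12$. (This is consistent with, but sharper than, the functional equation $B_n^c(-1-s)=(-1)^nB_n^c(s)$, which only forces the zero set to be symmetric in that line.)

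The one point requiring care is the arithmetic of the scalar factors: the minus sign inside $D(u)$ means the constant in front of $\left(u-\tfrac i2\right)^{n+1}$ must be purely imaginary, so that conjugation flips its sign and $D$ is produced as a sum $G+\bar G$ (to which the lemma applies) rather than a difference; and one must confirm the hypothesis of the lemma, namely that all zeros of $G$ lie strictly in the open upper half-plane, which holds since the only zero $\tfrac i2$ has imaginary part $\tfrac12>0$. Everything else is a routine computation.
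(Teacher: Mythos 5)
Your proof is correct, and it is essentially the paper's argument: the authors also substitute $s=-\tfrac12+iu$, split the generating function $\frac{e^z-1}{z}e^{(-1/2+iu)z}$ into $\frac1z\bigl(e^{(1/2+iu)z}-e^{(-1/2+iu)z}\bigr)$, and apply Lemma \ref{realzero} to $(\tfrac12+iu)^{n+1}$ and its coefficientwise conjugate, exactly as you do after first extracting the closed form $B_n^c(s)=\frac{(s+1)^{n+1}-s^{n+1}}{n+1}$. The only cosmetic difference is that you work from the explicit formula while the paper works at the level of the generating function coefficients; the key lemma and the reduction are identical.
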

\begin{proof} Define
$G_n(s)=B_n^c\left(-{1\over2}+is\right)$. Then all zeros of
$G_n(s)$ are real if and only if the real part of every zero of
$B_n^c(s)$ is $-{1\over2}$, or equivalently, all zeros of $B_n^c(s)$ lie
on the line $\Re s=-{1\over2}$. Thus it suffices to show that all 
zeros of $G_n(s)$ are real. By the definition of $G_n(s)$ we have
\begin{eqnarray*}
\sum_{n\ge0} G_n(s){z^n\over n!}={e^z-1\over
z}e^{\left(-{1\over2}+is\right)z}={e^{{1\over2}z}-e^{-{1\over2}z}\over
z}e^{isz}={1\over
z}\left(e^{\left({1\over2}+is\right)z}+\left(-e^{\left(-{1\over2}+is\right)z}\right)\right).
\end{eqnarray*}
Let
\begin{eqnarray*}
e^{\left({1\over2}+is\right)z}=\sum_{n\ge0}f_n(s){z^n\over n!}\quad{\rm and}\quad -e^{\left(-{1\over2}+is\right)z}=\sum_{n\ge0}g_n(s){z^n\over
n!}.
\end{eqnarray*}
Since $f_n(s)=\left({1\over2}+is\right)^n$, all zeros of
$f_n(s)$ (and also $-f_n(s)$) have positive imaginary part, namely
${1\over2}$. It is easy to see that $g_n(s)=(-1)^{n+1}\bar{f}_n(s)$.
Hence by Lemma \ref{realzero}, we obtain that all zeros of
$G_n(s)=f_n(s)+(-1)^{n+1}\bar{f}_n(s)$ are real, which completes the
proof.
\end{proof}

The above connection between the zeros of Bernoulli polynomials and their cognate sequence does not extended to arbitrary Appell sequences and their cognate sequences. Thus, one is naturally led to the problem of finding and characterizing all Sheffer sequences and their cognate sequence whose zeros exhibit symmetries akin to that displayed by the zeros of $\left\{B_{n}(s)\right\} _{n=0}^{\infty}$ and $\left\{B_{n}^c(s)\right\} _{n=0}^{\infty}$. Generally, it would be of interest to understand the relationship between the zeros of a Sheffer sequence and its cognate sequence.

To obtain some information about the zeros of the cognate sequences of Appell sequences, we begin with the following lemma. 

\begin{lem}\label{symzero}
Let $G(s)\in\mathbb{R}[s]$ with degree $n\ge1$. Then all
zeros of $G(s)$ are symmetrical with respect to the line
$\Re s=-{m\over2}$ $(m\in\mathbb{R})$ if and only if
$G(-s)=(-1)^nG(s-m)$.
\end{lem}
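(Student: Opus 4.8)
The plan is to characterize the symmetry of the zero locus of $G$ with respect to the vertical line $\Re s = -m/2$ by translating the picture so that the axis of symmetry becomes the imaginary axis, and then recognizing that symmetry across the imaginary axis is equivalent to a simple functional equation on the shifted polynomial. Concretely, set $H(s) = G(s - m/2)$, so that $H \in \mathbb{R}[s]$ has degree $n$, and the zeros of $G$ are the zeros of $H$ translated by $-m/2$. Since translation by a real number carries the line $\Re s = -m/2$ to the line $\Re s = 0$, the zeros of $G$ are symmetric about $\Re s = -m/2$ if and only if the zeros of $H$ are symmetric about the imaginary axis, i.e.\ invariant under $s \mapsto -\bar s$.

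Next I would observe that because $H$ has \emph{real} coefficients, its zero set is already invariant under complex conjugation $s \mapsto \bar s$. Composing the two reflections, invariance of the zero set under $s \mapsto -\bar s$ is equivalent to invariance under $s \mapsto -s$. A real polynomial of degree $n$ whose zero multiset is invariant under negation must satisfy $H(-s) = (-1)^n H(s)$: indeed, $H(-s)$ and $(-1)^n H(s)$ are both degree-$n$ polynomials with the same leading coefficient (the factor $(-1)^n$ accounts for the sign flip in the leading term of $H(-s)$) and, by the symmetry of the zero set, the same zeros with the same multiplicities, hence they are identical. Conversely, if $H(-s) = (-1)^n H(s)$ then plainly $H(s_0) = 0 \iff H(-s_0) = 0$, so the zero set is negation-invariant and therefore symmetric about the imaginary axis.

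Finally I would translate this functional equation back in terms of $G$. From $H(s) = G(s - m/2)$ we get $H(-s) = G(-s - m/2)$ and $(-1)^n H(s) = (-1)^n G(s - m/2)$; replacing $s$ by $s + m/2$ in the identity $H(-s) = (-1)^n H(s)$ yields $G(-s - m) = (-1)^n G(s)$, which upon substituting $s \mapsto -s$ is exactly $G(-s) = (-1)^n G(s - m)$. Running the chain of equivalences in both directions completes the proof.

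I do not anticipate a genuine obstacle here; the argument is elementary once the translation $H(s) = G(s-m/2)$ is made. The only point requiring a little care is the step asserting that a real polynomial with negation-invariant zero multiset satisfies $H(-s) = (-1)^n H(s)$ — one must be careful to argue at the level of multisets of zeros (so that $H(-s)$ and $(-1)^n H(s)$ agree as polynomials, not merely have the same zero set), and to check the leading-coefficient bookkeeping, but this is routine. An alternative, slightly slicker route avoiding any translation is to note directly that a real polynomial has zeros symmetric about $\Re s = c$ iff $\overline{G(\,\overline{2c - s}\,)}$ is a scalar multiple of $G(s)$; with $c = -m/2$ and real coefficients this reduces to $G(-m - s) = (-1)^n G(s)$ after matching leading coefficients, and then $s \mapsto -s$ gives the stated form. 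Either way the computation is short.
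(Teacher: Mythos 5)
Your proof is correct and is essentially the paper's argument in normalized form: both directions ultimately rest on factoring the polynomial over its zero multiset and matching the two sides of the functional equation. The only substantive difference is that your translation to $H(s)=G(s-m/2)$ makes explicit a step the paper leaves implicit --- that for a \emph{real} polynomial, symmetry of the zeros about the line $\Re s=-\tfrac{m}{2}$ is equivalent, via conjugation symmetry, to invariance under the point reflection $s\mapsto -m-s$ --- and it lets you avoid the paper's case split on the parity of $n$.
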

\begin{proof} Suppose that all zeros of $G(s)$ are
symmetrical with respect to the line $\Re s=-{m\over2}$ for some
$m\in\mathbb{R}$. First let $n$ be even. Then we may assume that $n$
zeros of $G(s)$ are of the form $-{m\over2}\pm q_k$
$(q_k\in\mathbb{C}, k=1,2,\ldots,{n\over2})$ so that $G(s)$ can be
written as
\begin{eqnarray*}
G(s)=\prod_{k=1}^{n/2}\left(s+{m\over2}+q_k\right)\left(s+{m\over2}-q_k\right).
\end{eqnarray*}
Hence
\begin{eqnarray*}
G(-s)&=&\prod_{j=1}^{n/2}\left(-\left(s-{m\over2}-q_k\right)\right)\left(-\left(s-{m\over2}+q_k\right)\right)
\\&=&(-1)^n\prod_{k=1}^{n/2}\left((s-m)+{m\over2}-q_k\right)\left((s-m)+{m\over2}+q_k\right)
=(-1)^nG(s-m).
\end{eqnarray*}
Now let $n$ be odd. Then $G(s)$ is of the form
\begin{eqnarray*}
G(s)=\left(s+{m\over2}\right)^j\prod_{k=1}^{(n-j)/2}\left(s+{m\over2}+q_k\right)\left(s+{m\over2}-q_k\right)
\end{eqnarray*}
where $j\ge1$ and $j$ is odd. A simple computation shows that
$G(-s)=(-1)^nG(s-m)$ also holds for this case.

Conversely, suppose that $G(-s)=(-1)^nG(s-m)$ holds for some
$m\in\mathbb{R}$. Let $a+bi$ $(a,b\in\mathbb{R})$ be a zero of
$G(s)$. Then $a-bi=-{m\over2}+((a+{m\over2})-bi)$ is also a zero of
$G(s)$. It follows from
\begin{eqnarray*}
0=G(a-bi)=G\left(-{m\over2}+\left(\left(a+{m\over2}\right)-bi\right)\right)
=(-1)^nG\left(-{m\over2}-\left(\left(a+{m\over2}\right)-bi\right)\right)
\end{eqnarray*}
that $-(a+m)+b_i=-{m\over2}-\left(\left(a+{m\over2}\right)-bi\right)$ is a zero
of $G(s)$, which implies that all zeros of $G(s)$ are
symmetrical with respect to the line $\Re s=-{m\over2}$. This completes
the proof.
\end{proof}

\begin{thm}\label{Appell1}
Let $\{G_n(s)\}_{n\ge0}$ be the Appell sequence for the pair $(g,az)$.
Then the following are equivalent:
\begin{itemize}
\item[{\rm (i)}] For $n\ge1$, the zeros of $G_n(s)$ are symmetric with
respect to the line $\Re s=-{g'(0)\over2a}$ ;
\item[{\rm (ii)}] For $n\ge1$, $G_n(-s)=(-1)^nG_n(s-{g'(0)\over a})$ ;
\item[{\rm (iii)}] $g(z)=g(-z)e^{g'(0)z}$.
\end{itemize}
\end{thm}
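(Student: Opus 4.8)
The plan is to get (i)$\Leftrightarrow$(ii) essentially for free from Lemma~\ref{symzero}, and to obtain (ii)$\Leftrightarrow$(iii) by translating the family of polynomial identities into a single identity of exponential generating functions.

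As a preliminary step I would expand the defining relation $\sum_{n\ge 0}G_n(s)z^n/n!=g(z)e^{asz}$. Writing $g=\sum_{j\ge 0}g_j z^j/j!$ and multiplying the two series, one obtains $G_n(s)=\sum_{k=0}^{n}\binom nk g_{n-k}a^k s^k$. Two things are worth extracting here: $G_n$ has degree exactly $n$ with leading coefficient $g_0 a^n\ne 0$ and (under the tacit hypothesis that $g\in\mathbb R[[z]]$ and $a\in\mathbb R\setminus\{0\}$, which is what makes ``$\Re s=c$'' meaningful) real coefficients; and $g'(0)=g_1$, so the line in (i) is $\Re s=-g_1/(2a)$ and the shift in (ii) is $g_1/a$. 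Given this, (i)$\Leftrightarrow$(ii) is simply Lemma~\ref{symzero} applied to $G=G_n$ with $m=g'(0)/a$, valid since $G_n\in\mathbb R[s]$ has degree $n\ge 1$.

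For (ii)$\Leftrightarrow$(iii) I would argue as follows. First note that the identity in (ii) also holds trivially for $n=0$ (both sides equal $g_0$), so (ii) is equivalent to the equality of formal power series $\sum_{n\ge 0}G_n(-s)z^n/n!=\sum_{n\ge 0}(-1)^n G_n\!\left(s-g'(0)/a\right)z^n/n!$. The left-hand side is $g(z)e^{-asz}$. For the right-hand side, substituting $w=-z$ and $t=s-g'(0)/a$ into $\sum_n G_n(t)w^n/n!=g(w)e^{atw}$ gives $\sum_n(-1)^n G_n\!\left(s-g'(0)/a\right)z^n/n!=g(-z)e^{(g'(0)-as)z}$. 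Hence (ii) is equivalent to $g(z)e^{-asz}=g(-z)e^{g'(0)z}e^{-asz}$, and cancelling the unit $e^{-asz}$ in $\mathbb R[[z]]$ (equivalently, multiplying through by $e^{asz}$) this is precisely (iii). Reversing the steps yields (iii)$\Rightarrow$(ii).

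There is no deep obstacle in this argument; the points that need care are the standing reality assumptions required to invoke Lemma~\ref{symzero}, and the bookkeeping in the generating-function step — in particular correctly recognizing the right-hand series as $g(-z)e^{(g'(0)-as)z}$ and justifying the removal of the common exponential factor, which is legitimate because $e^{-asz}$ is a unit in the ring of formal power series.
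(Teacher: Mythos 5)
Your proposal is correct and follows essentially the same route as the paper: (i)$\Leftrightarrow$(ii) via Lemma~\ref{symzero}, and (ii)$\Leftrightarrow$(iii) by equating the generating functions $g(z)e^{-asz}$ and $g(-z)e^{g'(0)z}e^{-asz}$ and cancelling the common exponential factor. The extra remarks on real coefficients, degree, and the unit status of $e^{-asz}$ are fine but add nothing beyond what the paper's argument already implicitly uses.
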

\begin{proof} It follows from Lemma \ref{symzero} that
(i) and (ii) are equivalent. Moreover, (ii) holds if and only if
\begin{eqnarray*}
g(z)e^{-asz}&=&\sum_{n\ge0}G_n(-s){z^n\over
n!}=\sum_{n\ge0}(-1)^nG_n\left(s-{g'(0)\over a}\right){z^n\over
n!}=g(-z)e^{-a(s-{g'(0)\over a})z}\\&=&g(-z)e^{g'(0)z}e^{-asz},
\end{eqnarray*}
which is equivalent to (iii).
\end{proof}

\begin{thm}\label{Appell2}
Let $\{G_n(s)\}_{n\ge0}$ be the Appell sequence for the pair $(g,az)$. If
all zeros of $G_n(s)$ for $n\ge1$ are symmetrical with respect to the line
$\Re s=-{g'(0)\over2a}$, then all zeros of $G_n^c(s)$ are
symmetrical with respect to the line $\Re s={g'(0)\over2a}$.
\end{thm}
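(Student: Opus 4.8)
The plan is to stay entirely at the level of exponential generating functions and to feed the statement through the functional equation for $g$ already supplied by Theorem \ref{Appell1}. The starting observation is that the cognate of an Appell sequence is again an Appell sequence: since $f(z)=az$ one has $f'(z)/g(z)=a/g(z)$, so $\{G_n^c(s)\}_{n\ge0}$ is the Appell sequence for the pair $(a/g,\,az)$, with
\begin{eqnarray*}
\sum_{n\ge0}G_n^c(s)\,\frac{z^n}{n!}=\frac{a}{g(z)}\,e^{saz}.
\end{eqnarray*}
We are tacitly in the setting of Lemma \ref{symzero} and Theorem \ref{Appell1} (that is, $g$ has real Taylor coefficients and $a\in\mathbb{R}$), so $G_n^c\in\mathbb{R}[s]$ with $\deg G_n^c=n$, and Lemma \ref{symzero} applies to each $G_n^c$ with $n\ge1$.

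Next I would translate the hypothesis via Theorem \ref{Appell1}: the assumption that the zeros of $G_n(s)$ $(n\ge1)$ are symmetric about $\Re s=-g'(0)/(2a)$ is equivalent to condition (iii) there, namely $g(z)=g(-z)e^{g'(0)z}$, equivalently $a/g(z)=\bigl(a/g(-z)\bigr)e^{-g'(0)z}$. Substituting this into the generating function above, and then recognizing the resulting expression as the generating function of $\{G_n^c\}$ evaluated at $w=-z$, gives
\begin{eqnarray*}
\sum_{n\ge0}G_n^c(-s)\,\frac{z^n}{n!}
&=&\frac{a}{g(z)}\,e^{-saz}
=\frac{a}{g(-z)}\,e^{-(g'(0)+sa)z}\\
&=&\sum_{n\ge0}(-1)^n\,G_n^c\!\left(s+\tfrac{g'(0)}{a}\right)\frac{z^n}{n!},
\end{eqnarray*}
so that $G_n^c(-s)=(-1)^n\,G_n^c\!\bigl(s+g'(0)/a\bigr)$ for every $n\ge1$.

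Finally, applying Lemma \ref{symzero} to $G_n^c$ with $m=-g'(0)/a$, this last identity is exactly the assertion that all zeros of $G_n^c(s)$ are symmetric with respect to the line $\Re s=-m/2=g'(0)/(2a)$, which proves the theorem. I do not foresee a genuine obstacle; the computation is short, and the only points demanding care are bookkeeping ones: identifying the cognate of $(g,az)$ as the Appell sequence for $(a/g,az)$, and keeping the reflections $s\mapsto-s$, $z\mapsto-z$ and the shift by $g'(0)/a$ mutually consistent. One should resist re-applying Theorem \ref{Appell1} directly to $\{G_n^c(s)\}$, since that result is pinned to the single candidate center $-\bigl(a/g\bigr)'(0)/(2a)$, which need not coincide with $g'(0)/(2a)$; Lemma \ref{symzero}, which accommodates an arbitrary center, is the appropriate tool.
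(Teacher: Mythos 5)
Your proposal is correct and follows essentially the same route as the paper: both reduce the claim to the identity $G_n^c(-s)=(-1)^nG_n^c\bigl(s+\tfrac{g'(0)}{a}\bigr)$ via Lemma \ref{symzero}, and establish it by substituting the functional equation $g(z)=g(-z)e^{g'(0)z}$ from Theorem \ref{Appell1}(iii) into the generating function $\tfrac{a}{g(z)}e^{saz}$ of the cognate sequence. The only difference is expository: you make explicit the bookkeeping (the cognate of $(g,az)$ is the Appell pair $(a/g,az)$, and the appeal to Lemma \ref{symzero} with $m=-g'(0)/a$) that the paper leaves implicit.
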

\begin{proof} It suffices to show that
$G_n^c(-s)=(-1)^nG_n^c(s+{g'(0)\over a})$ holds for all $n\ge1$. By
Theorem \ref{Appell1} we have
\begin{eqnarray*}
\sum_{n\ge0}G_n^c(-s){z^n\over n!}&=&{a\over g(z)}e^{-asz}={a\over
g(-z)}e^{(-as-g'(0))z}={a\over g(-z)}e^{-a(s+{g'(0)\over
a})z}\\&=&\sum_{n\ge0}(-1)^nG_n^c\left(s+{g'(0)\over
a}\right){z^n\over n!},
\end{eqnarray*}
which implies that for all $n \geq 1$, $G_n^c(-s)=(-1)^nG_n^c(s+{g'(0)\over
a})$. The proof is complete.
\end{proof}

 The following theorem shows that there are
infinitely many Appell sequences satisfying the assumptions of Theorem \ref{Appell2}.

\begin{thm}
Let $\{G_n(s)\}_{n\ge0}$ be the Appell sequence for the pair $(g,az)$. If
\begin{eqnarray}\label{e:gtan}
g(z)=2 \rho(z)(1+{\rm tanh}(kz)),
\end{eqnarray}
where $\rho(z)$ is any even function with $\rho(0)=1$ and
$k\in\mathbb{R}$, then all the zeros of $G_n(s)$ are symmetrical with
respect to the line $\Re s=-{k\over a}$ for $n\ge1$. Conversely, if all
the zeros of $G_n(s)$ $(n\ge1)$ are symmetrical with respect to a
vertical line in $\mathbb{C}$ then there exist an even function
$\rho(z)$ with $\rho(0)=1$ and $k\in\mathbb{R}$ satisfying
(\ref{e:gtan}).
\end{thm}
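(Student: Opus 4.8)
The plan is to funnel both implications through the single functional equation $g(z)=g(-z)e^{amz}$ for $g$. The bridge I would use is the elementary observation that, comparing the $z^n$-coefficients on the two sides of $g(z)e^{-asz}=g(-z)e^{-asz}e^{amz}$, the polynomial identities $G_n(-s)=(-1)^nG_n(s-m)$ hold \emph{simultaneously} for all $n\ge 0$ if and only if $g(z)=g(-z)e^{amz}$. Coupling this with Lemma \ref{symzero} (the $n=0$ relation being trivial) gives the working criterion: for every $n\ge 1$ the zeros of $G_n$ are symmetric about the line $\Re s=-m/2$ precisely when $g(z)=g(-z)e^{amz}$. Since Lemma \ref{symzero} is stated for real polynomials, I take $g$ to have real Taylor coefficients (as it does in every example in the paper), so that each $G_n\in\mathbb R[s]$.

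For the forward implication I would start from $g(z)=2\rho(z)(1+\tanh(kz))$ with $\rho$ even, $\rho(0)=1$, and first record, using $\rho'(0)=0$, that $g(0)=2$ and $g'(0)=2k$. The one identity that makes everything work is
\[
(1-\tanh u)\,e^{2u}=1+\tanh u ,
\]
true because both sides equal $2e^{2u}/(e^{2u}+1)$; from it, together with $\rho(-z)=\rho(z)$ and $\tanh(-kz)=-\tanh(kz)$, I get $g(-z)e^{2kz}=2\rho(z)(1-\tanh(kz))e^{2kz}=2\rho(z)(1+\tanh(kz))=g(z)$. Thus $g$ satisfies $g(z)=g(-z)e^{amz}$ with $m=2k/a$, and the criterion places the zeros of every $G_n$ ($n\ge1$) symmetrically about $\Re s=-m/2=-k/a$. (Alternatively, since $g'(0)=2k$ this identity is exactly condition (iii) of Theorem \ref{Appell1}, which yields the same conclusion.)

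For the converse I would first pin down the axis: reading the coefficients of $s^n$ and $s^{n-1}$ in $G_n(s)=\sum_{k=0}^n\binom nk g^{(n-k)}(0)a^ks^k$ shows the sum of the zeros of $G_n$ is $-\,n\,g'(0)/(a\,g(0))$, so for the real polynomial $G_n$ the only possible vertical axis of symmetry is $\Re s=-g'(0)/(a\,g(0))$ --- in particular the \emph{same} line for all $n$. Writing it as $\Re s=-k/a$ with $k:=g'(0)/g(0)\in\mathbb R$, the criterion gives $g(z)=g(-z)e^{2kz}$. I would then simply \emph{define} $\rho(z):=\tfrac14\,g(z)\bigl(1+e^{-2kz}\bigr)$ and verify that $\rho(-z)=\tfrac14 g(-z)(1+e^{2kz})=\tfrac14 g(z)e^{-2kz}(1+e^{2kz})=\rho(z)$, so $\rho$ is even, and that $2\rho(z)(1+\tanh(kz))=g(z)$ since $1+\tanh(kz)=2e^{2kz}/(e^{2kz}+1)$ --- this is \eqref{e:gtan}. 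Here $\rho(0)=g(0)/2$; to obtain the literal normalization $\rho(0)=1$ one first rescales $g$ to $2g/g(0)$, which rescales each $G_n$ and hence changes neither the zero loci nor their symmetry.

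None of the individual steps is deep; the only things that require genuine care are the sign-and-scale bookkeeping relating the shift $m$ to the axis and to $k$, and, in the converse, noticing that the first two Taylor coefficients of $g$ already fix the axis, so that ``symmetric about some vertical line for each $n$'' forces one common line. The single substantive ingredient is the two-line $\tanh$ identity: it is exactly what distinguishes the generating functions of the form \eqref{e:gtan} among all solutions $g$ of $g(z)=g(-z)e^{amz}$.
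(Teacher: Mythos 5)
Your proof is correct and follows essentially the same route as the paper's: both directions are funneled through the functional equation $g(z)=g(-z)e^{g'(0)z}$ of Theorem \ref{Appell1} combined with the identity $(1-\tanh u)e^{2u}=1+\tanh u$, and your $\rho(z)=\tfrac14 g(z)(1+e^{-2kz})$ is (up to the harmless factor of $2$ absorbed by the paper's normalization $g(0)=2$) exactly the paper's even part $\tfrac12\bigl(g(z)+g(-z)\bigr)$. The only genuine refinement is your centroid computation pinning the common axis to $\Re s=-g'(0)/(a\,g(0))$ for every $n$ at once, where the paper simply reads the line off the single zero of $G_1$.
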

\begin{proof} Suppose $g(z)=2\rho(z)(1+{\rm tanh}(kz))$
for some even function $\rho(z)$ such that $\rho(0)=1$ and
$k\in\mathbb{R}$. Then 
\begin{eqnarray*}
g(-z)e^{2kz}&=&2\rho(z)(1+{\rm
tanh}(-kz))e^{2kz}=2\rho(z)\left(1+{e^{-kz}-e^{kz}\over
e^{-kz}+e^{kz}}\right)e^{2kz}\\&=&2\rho(z)\left(1+{e^{kz}-e^{-kz}\over
e^{kz}+e^{-kz}}\right)=2\rho(z)(1+{\rm tanh}(kz))=g(z),
\end{eqnarray*}
and $g'(0)=2k$. Thus, Theorem \ref{Appell1} (iii) holds, and hence so does (i), i.e. the zeros of $G_n(s)$ $(n\ge1)$
are symmetric with respect to the line $\Re s=-{k\over a}$.
Conversely, suppose that all the zeros of $G_n(s)$ are symmetric
with respect to a vertical line in $\mathbb{C}$. Let
$g(z)=2\left(1+\sum_{n\ge1}g_nz^n\right)$. Since $G_1(s)=2(g_1+as)$, this vertical line is $\Re s=-\frac{g_1}{a}$. By Theorem \ref{Appell1}, $g(z)$ satisfies
$g(z)=g(-z)e^{2g_1z}$. A simple computation shows that
\begin{eqnarray*}
g(z)={g(z)+g(-z)\over2}(1+\tanh(g_1z)).
\end{eqnarray*}
Setting $\rho(z)={g(z)+g(-z)\over2}$ yields an even function, and
$k=g_1\in\mathbb{R}$, as desired.
\end{proof}

\begin{rem}We note that if $\{G_n(s)\}_{n\ge0}$ is the Appell sequence for the pair
$(g(z),z)$ then the Appell sequence for the pair $(g(z),az)$ is
$\{G_n(as)\}_{n\ge0}$. Thus it suffices to explore the Appell sequence for the pair $(g(z),z)$ when studying the zeros of the Appell sequence for the pair $(g(z),az)$.
\end{rem}
 \section{The zeros of a certain family of Sheffer sequences and their cognate sequences}\label{sec:sheffercognate}
We now turn our attention to the cognate sequences of Sheffer sequences previously treated in \cite{cfkt}. Note -- as a preview -- that the symmetry of the zeros of the cognate sequence about a line remains, and that our main result (c.f. Theorem \ref{thm:maintheorem}) also exploits the fact that (a suitable modification of) the non-exponential factor of the generating function is even. \\
In order to set up the statement of the main result, suppose $z_{2}>z_{1}>0$, and let $\Log(\cdot)$ denote the principle logarithm. Set
\begin{align*}
f(z) & =\Log(z_{1}-z)+\Log(z_{2}-z)-\Log(z_{1}+z)-\Log(z_{2}+z)\\
g(z) & =(z_{1}+z)(z_{2}+z).
\end{align*}
The sequence of Sheffer polynomials $\left\{ G_{n}(s)\right\} _{n=0}^{\infty}$
for the pair $(g(z),f(z))$ is generated by 
\[
\sum_{n=0}^{\infty}G_{n}(s)\frac{z^{n}}{n!}=g(z)e^{sf(z)}=(z_{1}-z)^{s}(z_{2}-z)^{s}(z_{1}+z)^{1-s}(z_{2}+z)^{1-s}.
\]
The corresponding cognate sequence $\left\{ G_{n}^{c}(s)\right\} _{n=0}^{\infty}$
is the Sheffer sequence for the pair $(f'(z)/g(z),f(z))$, generated
by 
\[
\sum_{n=0}^{\infty}G_{n}^{c}(s)\frac{z^{n}}{n!}=\frac{f'(z)}{g(z)}e^{sf(z)},
\]
where 
\begin{align*}
\frac{f'(z)}{g(z)} & =-\frac{1}{(z_{1}+z)(z_{2}+z)}\left(\frac{1}{z_{1}-z}+\frac{1}{z_{2}-z}+\frac{1}{z_{1}+z}+\frac{1}{z_{2}+z}\right)\\
 & =-\frac{2(z_{1}+z_{2})}{(z_{1}+z)^{2}(z_{2}+z)^{2}(z_{1}-z)(z_{2}-z)}(z_{1}z_{2}-z^{2}).
\end{align*}
The reader will note that both $g$ and $\frac{f'}{g}$ are of the form 
\[
(z_{1}-z)^{p}(z_{1}+z)^{p^{*}}(z_{2}-z)^{q}(z_{2}+z)^{q^{*}}\prod_{i=1}^{m}(\alpha_{i}-z^{2})^{p_{i}}
\]
for appropriate values of the constants. As Theorem \ref{thm:maintheorem} shows, both the Sheffer sequence for the pair $(g,f)$, and its cognate sequence have zeros that are symmetric about a line $\Re z=k$. This fact about the Sheffer sequence was already addressed in \cite[Theorem 5]{cfkt}. Theorem \ref{thm:maintheorem} is a generalization of that result.
\begin{thm}
\label{thm:maintheorem}Suppose $m\in\mathbb{N}$ and $p,p^{*},q,q^{*}$,$\alpha_{i},p_{i}$,
$1\le i\le m$, are real numbers such that $\alpha_{i}>z_{1}^{2}$.
Let 
\begin{equation}
h(z)=(z_{1}-z)^{p}(z_{1}+z)^{p^{*}}(z_{2}-z)^{q}(z_{2}+z)^{q^{*}}\prod_{i=1}^{m}(\alpha_{i}-z^{2})^{p_{i}}\label{eq:hzdef},
\end{equation}
let
\[
f(z)=\Log(z_{1}-z)+\Log(z_{2}-z)-\Log(z_{1}+z)-\Log(z_{2}+z),
\]
and $\left\{ H_{n}(s)\right\} _{n=0}^{\infty}$ be the Sheffer sequence
for the pair $(h(z),f(z))$. Assume that $p^{*}-p=q^{*}-q:=2c$. If
$c+p<0$, then all the zeros of $H_{n}(s)$, $n\gg1$, lie on the
line $s=c+it$. If $c+p\ge0$, then the same conclusion holds except
for $2\left\lceil c+p\right\rceil $ real zeros, each of which approaches
$c\pm(c+p+1-k)$, $0<k\le\left\lceil c+p\right\rceil $as $n\rightarrow\infty$. 
\end{thm}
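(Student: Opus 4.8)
The plan is to obtain an integral representation for $H_n(s)$ via the exponential generating function, and then to perform a saddle-point / steepest-descent analysis to locate the zeros for large $n$. Concretely, since $\sum_{n\ge 0} H_n(s) z^n/n! = h(z)e^{sf(z)}$, Cauchy's formula gives
\[
H_n(s) = \frac{n!}{2\pi i}\oint \frac{h(z)e^{sf(z)}}{z^{n+1}}\,dz,
\]
where the contour is a small circle about the origin. The dominant contributions as $n\to\infty$ come from the singularities of the integrand nearest the origin, namely the branch points of $f$ and the factors of $h$ at $z=\pm z_1$. I would deform the contour to wrap around the branch cuts emanating from $z_1$ and $-z_1$ (the factors $(z_1\mp z)^{p}$ and $(z_1\pm z)^{p^*}$, together with the logarithmic branch points of $f$), extracting from each a contour integral of Hankel type. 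The hypothesis $\alpha_i > z_1^2$ is precisely what guarantees the factors $(\alpha_i - z^2)^{p_i}$ contribute no singularity inside $|z|\le z_1$, so $z=\pm z_1$ genuinely are the nearest singularities; the farther singularities at $\pm z_2$ contribute exponentially smaller terms (by a factor $(z_1/z_2)^n$) and can be absorbed into the error.

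The second step is to write $H_n(s) \sim A_n(s) + B_n(s) + (\text{lower order})$, where $A_n$ comes from the cut at $z_1$ and $B_n$ from the cut at $-z_1$. Setting $z = z_1(1 - w/n)$ near $z_1$ and expanding $f$ and $h$ — using $f(z) \sim (p\text{-type exponents})\Log(z_1-z) + \text{analytic}$ near $z_1$ and the analogous expansion near $-z_1$ — the local integral should reduce, after rescaling, to a standard integral of the form $\int_0^\infty w^{\beta} e^{-w} \cdot(\cdots)\,dw$, i.e. to Gamma-function / confluent-hypergeometric asymptotics. The exponent that governs the power of $n$ appearing is $c+p$ at $z_1$ (from combining the contribution of $(z_1-z)^p$ with the $s$-dependent factor $e^{sf(z)}$ near the cut, whose local exponent is $-2s$, evaluated in the relevant variable), and symmetrically at $-z_1$. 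After normalizing by the common growth rate $z_1^{-n}n!\,n^{-(c+p+1)}$ or similar, one is left with
\[
\widetilde H_n(s) \sim e^{i\theta_n}\,\Psi(s) + e^{-i\theta_n}\,\overline{\Psi(\bar s)} + o(1)
\]
for an explicit entire (or meromorphic) function $\Psi$ and a phase $\theta_n$ growing linearly in $n$; here I expect the key identity to be that the two boundary contributions are complex conjugates of one another when $s$ is replaced by $c+it$ with $t$ real, because the problem is set up so that $h$ is even-like about the line $\Re z = c$ after the substitution $s\mapsto c+it$ (the paper flags that "a suitable modification of the non-exponential factor is even"). This reduces the zero-counting to asking where $|\Psi(c+it)|$ is matched by its conjugate with an oscillating phase — i.e. a real equation in $t$ — which, by a Hurwitz / argument-principle argument, forces the zeros of $H_n$ to cluster on the line $\Re s = c$, one in each interval of length $\pi/\theta_n'$, accounting for all but finitely many.

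The third step handles the exceptional real zeros when $c+p \ge 0$. In that regime the leading local exponent $\beta = -(c+p+1)$ is a nonpositive integer shift away from creating genuine poles in the Gamma factor $\Gamma(\beta + \cdots)$, and the "entire function" $\Psi$ above picks up $\lceil c+p\rceil$ poles (or, dually, the rescaling needs $\lceil c+p\rceil$ subtracted terms that are polynomial in $n$ and carry their own zeros). Tracking these subtracted polynomial pieces — which are essentially truncations of the Taylor expansion of $h(z)e^{sf(z)}$ at $z=0$ paired against the residues at $z=\pm z_1$ — one finds each contributes a zero in $s$ near $c \pm (c+p+1-k)$, $1\le k\le \lceil c+p\rceil$, giving the stated $2\lceil c+p\rceil$ real zeros. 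For the real line $\Re s = c$ versus genuinely real $s$: when these exceptional zeros land at $c\pm(c+p+1-k)$ they are real only if $c$ is (which it is, by hypothesis on the exponents), so they peel off the line; one shows by a perturbation/Rouché estimate that exactly these and no others escape.

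The main obstacle will be the uniformity of the asymptotics in $s$: I need the expansion $\widetilde H_n(s)\sim \Psi(s) + \overline{\Psi(\bar s)}$ to hold uniformly on compact subsets of $\mathbb{C}$ (or at least on a growing family of rectangles straddling $\Re s = c$), with error $o(1)$ that does not degrade near the zeros of the limit — otherwise Hurwitz's theorem cannot be applied to conclude the zeros converge to the zero set of the limit. Controlling the branch-cut integrals uniformly in $s$, especially the interplay between the polynomial growth $n^{-(c+p+1)}$ and the oscillation $e^{i\theta_n}$, and ruling out spurious zeros drifting in from $\pm z_2$ or from infinity, is where the real work lies; this is presumably carried out in subsection~\ref{subsec:asymptotics}, with the zero-localization completed in subsection~\ref{subsec:zerolocation}.
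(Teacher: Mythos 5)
Your opening moves match the paper's: the Cauchy integral for $H_n$, deformation onto loops around the cuts $(-\infty,-z_1]$ and $[z_1,\infty)$, and the use of the evenness of $h(z)\frac{(z_1-z)^c(z_2-z)^c}{(z_1+z)^c(z_2+z)^c}$ (forced by $p^*-p=q^*-q=2c$) to pair the two cut contributions into a conjugate pair, so that $\pi H_n(c+int)$ is the imaginary (or real) part of a single cut integral. The real-zero count via the $\sin\pi(c+p\pm x+1)\Gamma(c+p\pm x+1)$ structure is also close in spirit to the paper's Lemma \ref{lem:realzeros}, though the paper gets the zeros near $c\pm(c+p+1-k)$ by an explicit sign-change and intermediate value argument rather than by tracking poles of $\Gamma$.

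The genuine gap is in your second step. You treat the coefficient asymptotics as globally dominated by the algebraic singularities at $z=\pm z_1$, reducing everything to Hankel/Gamma asymptotics with a phase $e^{i\theta_n}$, $\theta_n$ linear in $n$. That is only correct in the regime where $s$ stays bounded (real $x$, or $nt\ll\ln^4 n$), which is exactly where the paper's Proposition \ref{prop:asympsmallt} applies. But the zeros on the line have the form $s=c+int$ with $t$ ranging over $(0,T)$, so the exponent $sf(z)$ is of size $n$ and the integrand is $e^{-n\phi(z,t)}$ with $\phi(z,t)=\Log z-itf(z)$: for $t$ bounded away from $0$ the integral is governed by a moving saddle point $\zeta(t)$ in the complex plane, not by the endpoint $z_1$, and the oscillation rate is $n\,\Im\phi(\zeta(t),t)$, which varies with $t$ (this is what produces the nonconstant limiting density in Section \ref{sec:limitdist}). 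Relatedly, a Hurwitz-type argument against a fixed limit function $\Psi$ cannot account for all $n$ zeros, because after undoing the rescaling the on-line zeros have imaginary parts spreading over $(-nT,nT)$; one must instead count zeros by the total change of argument of the cut integral $p(\zeta(t))$ over $t\in(0,T)$ (computed in the paper via the argument principle for $g(\zeta)$ on a closed curve, contributing roughly $n\pi/2$ in each quadrant), add the real zeros from the small-$t$ analysis, compare with the degree $n$, and then use the symmetries $H_n(c-x)=(-1)^nH_n(c+x)$ and $\overline{H_n(\bar s)}=H_n(s)$ to force the last one or two unaccounted zeros onto the line. Without the saddle-point regime and this global degree-versus-count bookkeeping, your argument can only localize the $O(1)$ zeros near the real axis, not the $\sim n$ zeros that make up the bulk of the statement.
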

\begin{figure}[h]
\begin{centering}
\includegraphics[scale=0.5]{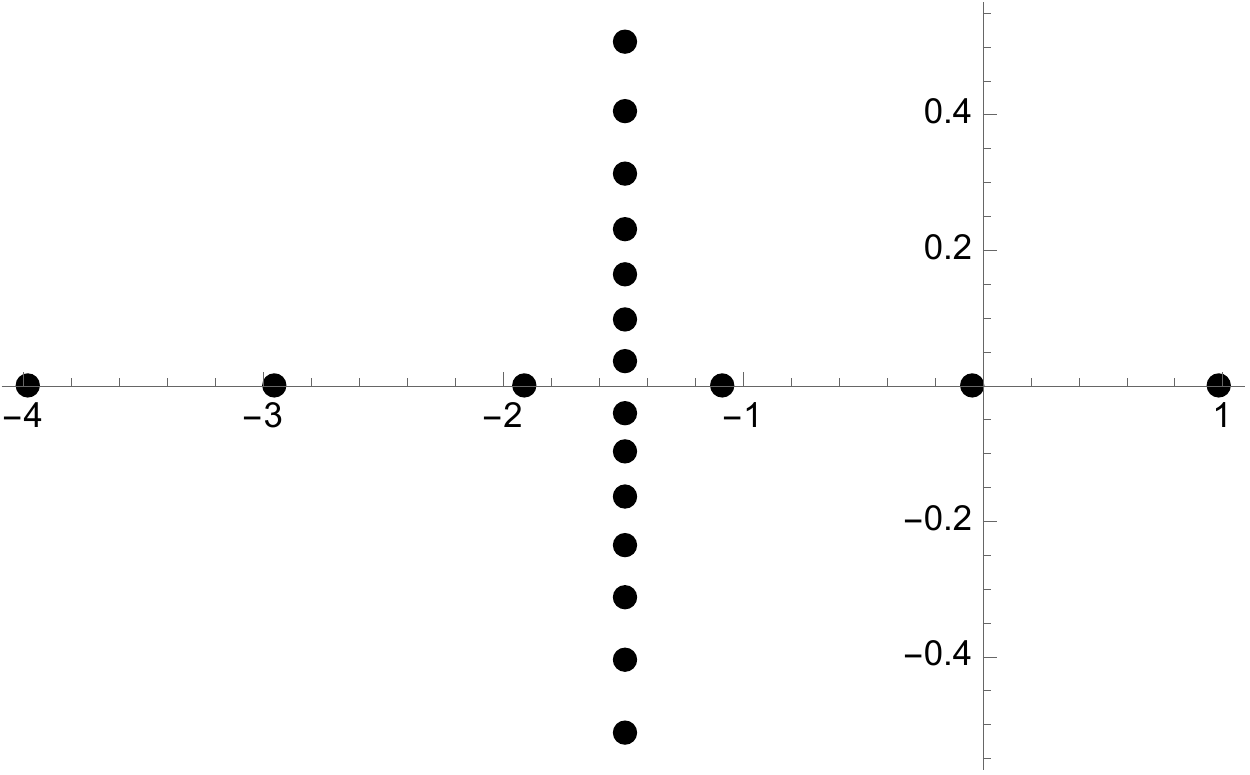} $\qquad$\includegraphics[scale=0.5]{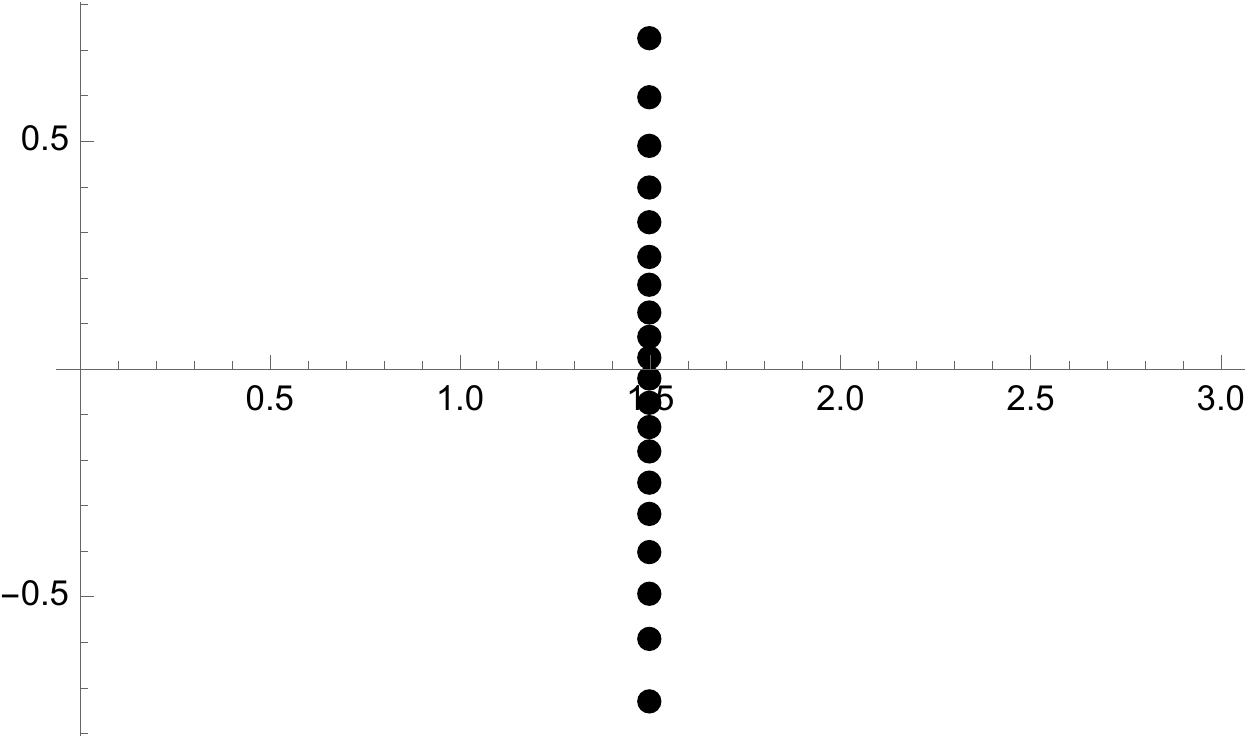}
\par\end{centering}
\caption{Z\label{fig:ZerosHn}eros of $H_{n}(s)$}
\end{figure}
\textit{Example 1.}
Let $z_{1}=1$, $z_{2}=7$. The left side graph in Figure \ref{fig:ZerosHn} shows the zeros of $H_{20}(s)$ when $p=4, p^*=1, q=2, q^*=-1$, $p-p^*=3=q-q^*$, $c+p=\frac{11}{2}$ and
\[
h(z)=(1-z)^{4}(1+z)(7-z)^{2}(7+z)^{-1}(2-z^{2})^{-1}.
\]
The right side graph shows the zeros of $H_{20}(s)$ when $p=-4, p^*=-1, q=-2, q^*=1$, $p-p^*=-3=q-q^*$, $c+p=-\frac{11}{2}$ and

\[
h(z)=(1-z)^{-4}(1+z)^{-1}(7-z)^{-2}(7+z)^{1}(2-z^{2})^{-1}.
\]

The proof of Theorem \ref{thm:maintheorem} is presented in the next two sections, the first of which develops the asymptotics for an integral representation of the $H_n(s)$s, followed by a section on the counting of the zeros of these polynomials on the designated locus.
\subsection{The asymptotic formulas} \label{subsec:asymptotics}
In this section we find an integral representation for the Sheffer polynomials $H_n(c+int)$ described in Theorem \ref{thm:maintheorem}, and we develop of an asymptotic formula for said integral representation, which is uniform on the parameter range $e^{-\ln^4 n} / n \ll t\ll \ln^4 n/n$. To begin, let $\{H_n\}_{n=0}^{\infty}$ be the Sheffer sequence for the pair $(h,f)$ as in the statement of Theorem \ref{thm:maintheorem}. The substitution $s=c+int$ and the Cauchy integral formula gives
\begin{align*}
H_{n}\left(c+int\right) & =\frac{n!}{2\pi i}\ointctrclockwise_{|z|=\epsilon}\frac{h(z)e^{(c+int)f(z)}}{z^{n+1}}dz\\
 & =\frac{n!}{2\pi i}\ointctrclockwise_{|z|=\epsilon}\psi(z)e^{-n\phi(z,t)}dz,
\end{align*}
where 
\[
\phi(z,t)=\Log z-it\left(\Log(z_{1}-z)+\Log(z_{2}-z)-\Log(z_{1}+z)-\Log(z_{2}+z)\right),
\]
and
\[
\psi(z) =\frac{h(z)}{z}\frac{(z_{1}-z)^{c}(z_{2}-z)^{c}}{(z_{1}+z)^{c}(z_{2}+z)^{c}}.
\]
It follows from the definition of $h(z)$ that, as a function of $z$, $\psi(z)e^{-n\phi(z,t)}$
is analytic on the complement of 
\[
\{0\}\cup[z_{1},\infty)\cup(-\infty,-z_{1}].
\]
The defintions of $h(z)$ and $f(z)$ also imply that on any
circle arc $\mathcal{C}_{R}$ with large radius $R$, 
\[
\lim_{R\rightarrow\infty}\int_{\mathcal{C}_{R}}\frac{h(z)e^{(c+int)f(z)}}{z^{n+1}}dz=0, \qquad \textrm{for} \quad n \gg 1.
\]
Thus,
\begin{equation} \label{eq:Hintegral}
H_{n}\left(c+int\right)=\frac{n!}{2\pi i}\int_{\Gamma_{1}\cup\Gamma_{2}}\psi(z)e^{-n\phi(z,t)}dz,
\end{equation}
where $\Gamma_{1}$ and $\Gamma_{2}$ are two halves of a loop around infinity and the cuts
$(-\infty,-z_{1}]$ and $[z_{1},\infty)$ with the counter clockwise
orientation. \\
\begin{figure}[h]
\begin{center}
\includegraphics[width=4 in]{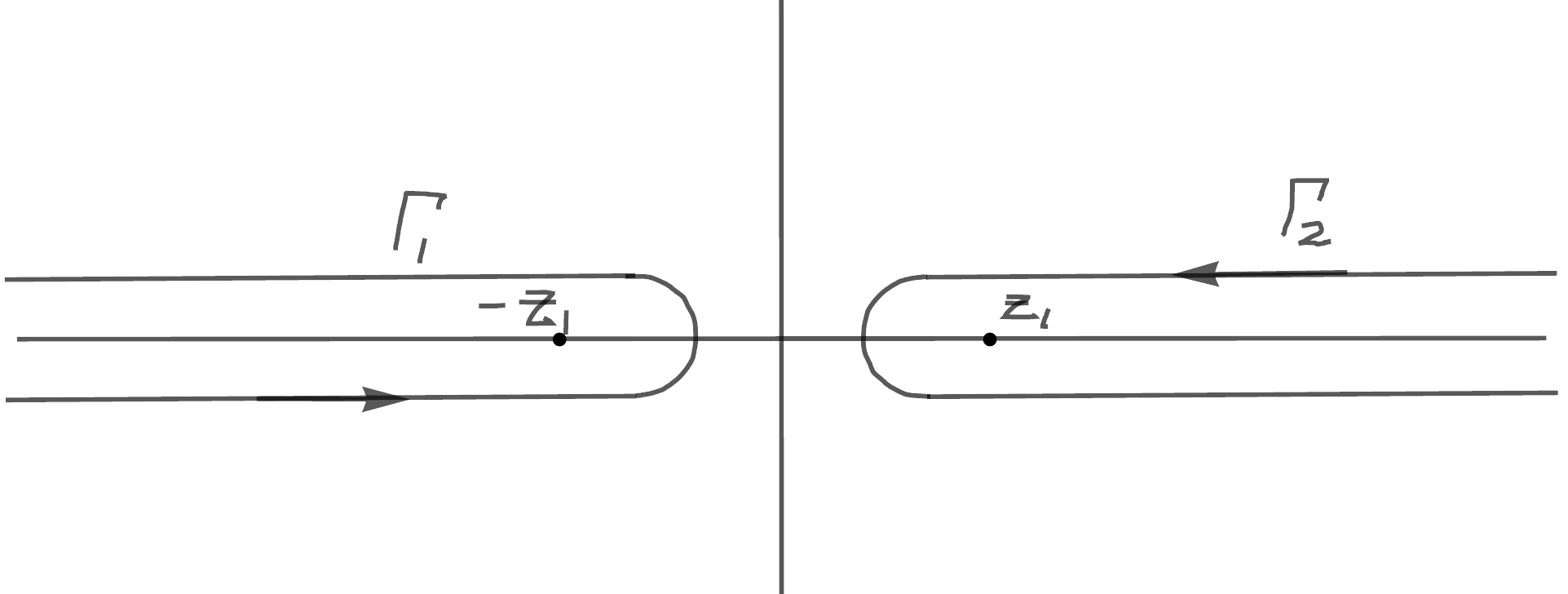}
\caption{The loop around the cuts and infinity}
\label{default}
\end{center}
\end{figure}
The assumption $p^{*}-p=q^{*}-q$ implies that 
\[
h(z)\frac{(z_{1}-z)^{c}(z_{2}-z)^{c}}{(z_{1}+z)^{c}(z_{2}+z)^{c}}
\]
is even. Using the substitution $z \mapsto -z$ we see that the part of the
integral in \eqref{eq:Hintegral} over $\Gamma_{1}$ is equal to
\[
(-1)^{n}\int_{\Gamma_{2}}\psi(z)e^{-n\phi(z,-t)}dz.
\]
Making the subsequent substitution $z \mapsto \overline{z}$, this integral
becomes the conjugate of 
\[
(-1)^{n+1}\int_{\Gamma_{2}}\psi(z)e^{-n\phi(z,t)}dz.
\]
We deduce that $\pi H_{n}(c+int)$ is imaginary part, or $-i$ times
the real part of the integral
\begin{equation}
\int_{\Gamma_{2}}\psi(z)e^{-n\phi(z,t)}dz, \label{eq:intloopcut}
\end{equation}
depending on whether $n$ is even or odd.\\
For the sake of completeness we now present the setup and definitions developed in \cite{cfkt} in order to help establish the asymptotic formula we see. To this end, let 
\[
T_{1}:=\frac{z_{2}-z_{1}}{z_{1}+z_{2}}, \qquad T_{2}:=\frac{z_{1}+z_{2}}{4\sqrt{z_{1}z_{2}}},
\]
and 
\begin{align}
T & =\begin{cases}
T_{1} & \text{if }z_{1}^{2}-6z_{1}z_{2}+z_{2}^{2}\ge0\\
T_{2} & \text{if }z_{1}^{2}-6z_{1}z_{2}+z_{2}^{2}<0.
\end{cases}\label{eq:Tdefn}
\end{align}
For $t\in(0,T)$, the function 
\begin{align}
\zeta=\zeta(t): & =\frac{z_{1}+z_{2}}{2}\left(it-\sqrt{T_{1}^{2}-t^{2}}+\sqrt{1-2t^{2}-2it\sqrt{T_{1}^{2}-t^{2}}}\right)\quad\text{for}\quad0\le t<T_{1},\label{eq:zeta1defnT1}\\
\zeta=\zeta(t): & =\frac{z_{1}+z_{2}}{2}\left(it+i\sqrt{t^{2}-T_{1}^{2}}+\sqrt{1-2t^{2}-2t\sqrt{t^{2}-T_{1}^{2}}}\right)\quad\text{for}\quad T_{1}\le t\le T_{2},\label{eq:zeta1defnT2}
\end{align}
is a solution of $\phi_{z}(z,t)=0$.

Set 
\begin{eqnarray}
g(\zeta)&:=&\frac{2\pi\psi^{2}(\zeta)e^{-2n\phi(\zeta,t)}}{n\phi_{z^{2}}(\zeta,t)},\label{eq:gzetadef} \\
p(\zeta)&:=&\int_{\Gamma_{2}}\psi(z)e^{-n\phi(z,t)}dz \nonumber,
\end{eqnarray}
and consider the following intervals 
\begin{align*}
I_{1} & =\left\{ t|\ln^{4}n/n\ll t<T-\ln^{2}n/n^{2/3}\right\} ,\\
I_{2} & =\left\{ t|\ln^{4}n/n\ll t<T_{1}-\ln^{2}n/n^{2/3}\right\}, \\
I_{3} & =[T_{1}+\ln^{2}n/n^{2/3},T_{2}-\ln^{2}n/n^{2/3}],\\
I & =\left\{ t|1/n^{2/3}\ll T-t<\ln^{2}n/n^{2/3}\right\} .
\end{align*}
With these definitions, the results in \cite{cfkt} show that as $n\rightarrow\infty$, 
\[
p^{2}(\zeta)\sim g(\zeta)
\]
uniformly on $t\in I_{2}\cup I_{3}$ if $T=T_{2}$, and on $t\in I_{1}$
if $T=T_{1}$. Furthermore, if $t\in I$, then
\[
p(\zeta)\sim\frac{4c\psi(\zeta(T))e^{-n\phi(\zeta,t)}}{\sqrt{6}\sqrt{C^{3}\phi_{z^{3}}(\zeta(T),T)}}\alpha_{n}(t)
\]
uniformly in $t$, where 
\[
C=\begin{cases}
\frac{z_{1}+z_{2}}{2}\left(-\sqrt{2T_{1}}+\frac{T_{1}\sqrt{2T_{1}}}{\sqrt{2T_{1}^{2}-1}}\right) & \text{ if }T=T_{1}\\
\frac{\sqrt{32}(z_{1}z_{2})^{3/4}}{\sqrt{(z_{1}+z_{2})(-z_{1}^{2}+6z_{1}z_{2}-z_{2}^{2})}} & \text{ if }T=T_{2}
\end{cases}
\]
and $\Re\alpha_{n}(t)\ge0$. If $T=T_{2}$, then 
\[
p(\zeta)\sim\frac{4d\psi(\zeta(T_{1}))e^{-n\phi(\zeta,t)}}{\sqrt{6}\sqrt{D^{3}\phi_{z^{3}}(\zeta(T_{1}),T_{1})}}\beta_{n}(t)
\]
uniformly on $\frac{1}{n}\ll|T_{1}-t|\le\ln^{2}n/n^{2/3}$, where 
\[
D=-\frac{(z_{1}+z_{2})\sqrt{T_{1}}}{\sqrt{2}}\left(1+\frac{iT_{1}}{\sqrt{1-2T_{1}^{2}}}\right)
\]
and $\Re\beta_{n}(t)\ge0$.
Before we state and prove our main estimate, we note that \eqref{eq:hzdef} implies that as $z\rightarrow z_{1}$, 
\[
h(z)=(z_{1}-z)^{p}\left(h_{p}+\mathcal{O}\left(z_{1}-z\right)\right)
\]
for some $h_{p}\in\mathbb{R}$. In the remainder of this section,
we prove the following asymptotic equivalence. 
\begin{prop}
\label{prop:asympsmallt} Let $h_p$ and $p(\zeta)$ be as defined in the beginning of the section, and let $\Gamma$ denote the Gamma function. As $n\rightarrow\infty$ the following asymptotic formula holds uniformly on $e^{-\ln^{4}n}/n\ll t\ll\ln^{4}n/n$:
\[
p(\zeta)\sim\frac{2i\pi h_{p}(z_{2}-z_{1})^{c+int}}{z_{1}^{n-p}2^{c+int}(z_{2}+z_{1})^{c+int}n^{c+p+1+int}\Gamma(-c-p-int)}.
\]
\end{prop}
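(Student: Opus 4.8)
The plan is to analyze the integral $p(\zeta)=\int_{\Gamma_2}\psi(z)e^{-n\phi(z,t)}dz$ in the regime where $t$ is of order $\ln^4 n/n$ or smaller, which is precisely the range \emph{not} covered by the saddle-point asymptotics imported from \cite{cfkt}; in this regime the saddle $\zeta(t)$ has collided with the branch point $z_1$, so Laplace/steepest-descent through an interior critical point is no longer the right tool. Instead, I would deform $\Gamma_2$ so that it hugs the cut $[z_1,\infty)$, and localize the integral near the endpoint $z=z_1$. Writing $z=z_1(1+u/n)$ (or $z = z_1 + w$ with $w$ small), the factor $z^{-n}$ coming from $e^{-n\Log z}$ contributes $z_1^{-n}e^{-u}$ up to lower order, while $h(z)\sim h_p(z_1-z)^p$ supplies the branch-point behavior $(z_1-z)^p = (-w)^p$ and, together with the $(z_1-z)^c$ in $\psi$ and the $e^{int f(z)}$ factor (whose singular part near $z_1$ is $(z_1-z)^{int}$), produces an overall power $(z_1-z)^{c+p+int}$. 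The remaining smooth factors — $(z_2\mp z)$, $(z_1+z)$, the $(\alpha_i - z^2)$ — are evaluated at $z=z_1$ and yield the constants $(z_2-z_1)$, $2z_1$, etc.; one has to check that over the relevant range of $t$ these really are asymptotically constant (this is where $t\ll \ln^4 n/n$ enters).

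Concretely, after the substitution the integral over the two sides of the cut near $z_1$ becomes, to leading order, a Hankel-type contour integral of the form $\int (-w)^{c+p+int} e^{nw/z_1}\,dw$ over a loop around $[z_1,\infty)$, and Hankel's formula $\frac{1}{2\pi i}\int_{\mathcal H} (-w)^{-s} e^{w}dw = 1/\Gamma(s)$ (suitably rescaled by $w\mapsto n w/z_1$) converts this into a reciprocal Gamma factor $1/\Gamma(-c-p-int)$ together with the power $n^{-(c+p+1+int)}$ and $z_1^{\,c+p+1+int}$. Collecting: the $z_1^{-n}$ from the denominator $z^{n+1}$, the $z_1^{c+p+1}$ from rescaling, the $h_p$, the constant $(z_2-z_1)^{c+int}$ from $(z_2-z)^{c+int}$ evaluated at $z_1$ — wait, more carefully one gets $(z_2-z)$ to the appropriate exponent from both $h$ and $\psi$ and from $e^{intf}$, combining to $(z_2-z_1)^{c+int}$ — and the factors $2^{-(c+int)}$ and $(z_1+z_2)^{-(c+int)}$ coming from $(z_1+z)^{-(c+int)}$ and $(z_2+z)^{-(c+int)}$ evaluated at $z=z_1$. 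Matching all of these against the claimed formula
\[
p(\zeta)\sim\frac{2i\pi h_{p}(z_{2}-z_{1})^{c+int}}{z_{1}^{n-p}2^{c+int}(z_{2}+z_{1})^{c+int}n^{c+p+1+int}\Gamma(-c-p-int)}
\]
is then a bookkeeping exercise; the factor $2i\pi$ is the Hankel normalization.

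The main technical obstacle, and the reason this is stated as a separate proposition rather than folded into \cite{cfkt}, is controlling the error uniformly across the whole window $e^{-\ln^4 n}/n \ll t \ll \ln^4 n/n$. The subtlety is twofold: first, when $t$ is as small as $e^{-\ln^4 n}/n$ the quantity $int$ is tiny, so $1/\Gamma(-c-p-int)$ is close to $1/\Gamma(-c-p)$, which \emph{vanishes} whenever $c+p$ is a nonnegative integer — there the leading term degenerates and one must track that the zero of $1/\Gamma$ is genuine and of the right order (this dovetails with the ``$2\lceil c+p\rceil$ exceptional real zeros'' in Theorem \ref{thm:maintheorem}); second, one must ensure the localization near $z_1$ is legitimate, i.e. that the contribution of $\Gamma_2$ away from a neighborhood of $z_1$ of radius $\sim \ln^? n / n$ is exponentially smaller, which requires a lower bound on $\Re(\phi(z,t)-\phi(z_1,t))$ along the deformed contour uniformly in this range of $t$. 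I would handle the tail by the same convexity/monotonicity estimates on $\Re\phi$ used in \cite{cfkt}, and handle the near-$z_1$ expansion by Taylor-expanding $\Log z$, $f(z)$, and the smooth part of $h$ about $z_1$ and showing the resulting corrections to the Hankel integral are $\mathcal{O}(\ln^? n/n)$ relative to the main term — uniformly, because the implied constants depend only on $z_1,z_2$ and the fixed exponents, not on $t$.
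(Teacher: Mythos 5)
Your proposal is correct and follows essentially the same route as the paper: localize the Hankel-contour integral at the branch point $z_1$, extract the singular factor $(z_1-z)^{c+p+int}$ together with the smooth factors evaluated at $z_1$, apply the Hankel representation of the reciprocal Gamma function (the paper writes the main term as $2i\sin\pi(c+p+1+int)\Gamma(c+p+1+int)$ and converts via reflection), and control the tails uniformly on $e^{-\ln^4 n}/n\ll t\ll \ln^4 n/n$. You also correctly identify the two genuine technical points — the degeneration of the main term when $c+p\in\mathbb{Z}^{+}$ as $nt\to 0$ (which is exactly why the lower bound on $t$ is imposed; cf.\ Remark \ref{rem:smallt}) and the exponential smallness of the contribution away from $z_1$ — which the paper handles by truncating at $\Re z=\ln^5 n/n$ and bounding the oscillatory factor by $e^{2n\pi t}$.
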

\begin{proof}
We rewrite $p(\zeta)$ as 
\[
p(\zeta)=\int_{+\infty}^{(z_{1}^{-})}\psi(z)e^{-n\phi(z,t)}dz,
\]
where path of integration is the Hankel contour which loops around
the ray $[z_{1},\infty)$. The notation $z_{1}^{-}$ means the path
goes around $z_{1}$ in the negative direction. We make the substitution
$w=z/z_{1}$ followed by the subsitution $e^{z}=w$ to arrive at the expression 
\[
p(\zeta(t))=z_{1}\int_{+\infty}^{(1^{-})}\psi(wz_{1})e^{-n\phi(wz_{1},t)}dw=z_{1}\int_{+\infty}^{(0^{-})}\psi(z_{1}e^{z})e^{-n\phi(z_{1}e^{z},t)}e^{z}dz.
\]
Suppose $\epsilon>0$ is small such that $n\epsilon=o(1)$. We break
the last integral into three pieces: 
\begin{align}
 & z_{1}\int_{(0^{-})}^{\left(ln^{5}n/n\right)\pm i\epsilon}\psi(z_{1}e^{z})e^{-n\phi(z_{1}e^{z},t)}e^{z}dz\nonumber \\
+ & z_{1}\int_{\left(\ln^{5}n/n\right)+i\epsilon}^{+\infty+i\epsilon}\psi(z_{1}e^{z})e^{-n\phi(z_{1}e^{z},t)}e^{z}dz\nonumber \\
+ & z_{1}\int_{+\infty-i\epsilon}^{\left(\ln^{5}n/n\right)-i\epsilon}\psi(z_{1}e^{z})\frac{2i\pi h_{p}(z_{2}-z_{1})^{c+int}}{z_{1}^{n-p}2^{c+int}(z_{2}+z_{1})^{c+int}n^{c+p+1+int}\Gamma(-c-p-int)}e^{-n\phi(z_{1}e^{z},t)}e^{z}dz.\label{eq:splitintegral}
\end{align}
Recall that
\begin{equation}
\psi(z)e^{-n\phi(z,t)}=\frac{h(z)}{z^{n+1}}\frac{(z_{1}-z)^{c}(z_{2}-z)^{c}}{(z_{1}+z)^{c}(z_{2}+z)^{c}}\left(\frac{(z_{1}-z)(z_{2}-z)}{(z_{1}+z)(z_{2}+z)}\right)^{nit},\label{eq:integrand}
\end{equation}
and 
\[
h(z)=(z_{1}-z)^{p}\left(h_{p}+\mathcal{O}\left(z_{1}-z\right)\right)
\]
for $z_{1}-z=o(1)$. Thus, the first integral in \eqref{eq:splitintegral}
is asymptotic to 
\begin{align}
 & \frac{z_{1}^{c+p+int}h_{p}(z_{2}-z_{1})^{c+int}}{z_{1}^{n}2^{c+int}z_{1}^{c+int}(z_{2}+z_{1})^{c+int}}\int_{(0^{-})}^{\left(\ln^{5}n/n\right)\pm i\epsilon}\frac{(1-e^{z})^{c+p+int}}{e^{nz}}dz\nonumber \\
\sim & \frac{z_{1}^{c+p+int}h_{p}(z_{2}-z_{1})^{c+int}}{z_{1}^{n}2^{c+int}z_{1}^{c+int}(z_{2}+z_{1})^{c+int}}\int_{(0^{-})}^{\left(\ln^{5}n/n \right)\pm i\epsilon}(-z)^{c+p+int}e^{-nz}dz\nonumber \\
= & \frac{z_{1}^{c+p+int}h_{p}(z_{2}-z_{1})^{c+int}}{z_{1}^{n}2^{c+int}z_{1}^{c+int}(z_{2}+z_{1})^{c+int}n^{c+p+1+int}}\int_{(0^{-})}^{\left(\ln^{5}n\right)\pm in\epsilon}(-z)^{c+p+int}e^{-z}dz.\label{eq:mainint}
\end{align}
The following auxiliary lemma helps us further refine this estimate.
\begin{lem}
\label{lem:smalltasympmainterm} Suppose, as in the statement of Proposition \ref{prop:asympsmallt}, that $e^{-\ln^4n}/n \ll t \ll \ln^4 n /n$. If $n\epsilon=o(1)$, then the following asymptotic equivalence holds:
\[
\int_{(0^{-})}^{\ln^{5}n\pm in\epsilon}(-z)^{c+p+int}e^{-z}dz\sim2i\sin\pi(c+p+1+int)\Gamma(c+p+1+int).
\]
\end{lem}

\begin{proof}
We apply the Hankel contour representation (see for example \cite{moretti}) for the Gamma function 
\[
\Gamma(s)=\frac{1}{2i\sin\pi s}\int_{+\infty}^{(0^{-})}e^{-z}(-z)^{s-1}dz \qquad (s \neq 0, -1, -2, \ldots)
\]
to rewrite 
\[
\int_{(0^{-})}^{\ln^{5}n\pm in\epsilon}(-z)^{c+p+int}e^{-z}dz
\]
as 
\begin{equation}
2i\sin\pi(c+p+1+int)\Gamma(c+p+1+int)-\int_{\ln^{5}n+in\epsilon}^{+\infty+in\epsilon}(-z)^{c+p+int}e^{-z}dz+\int_{\ln^{5}n-in\epsilon}^{+\infty-in\epsilon}(-z)^{c+p+int}e^{-z}dz.\label{eq:maintermGamma}
\end{equation}

In the case $e^{-\ln^{4}n}\ll nt=\mathcal{O}(1)$, we have 
\begin{equation}
2i\sin\pi(c+p+1+int)\Gamma(c+p+1+int)\gg e^{-\ln^{4}n}.\label{eq:sinegammaprod}
\end{equation}
Moreover, in this case, the second and the third terms in \eqref{eq:maintermGamma}
are bounded by 
\begin{equation} \label{eq:absbound}
\int_{\ln^{5}n\pm in\epsilon}^{+\infty\pm in\epsilon}|z|^{c+p}e^{-nt\Arg(-z)}e^{-\Re z}d|z|.
\end{equation}
Since $n\epsilon=o(1)$, we have $|z|=\Re z+o(1)$, whence by the
asymptotic behavior of the upper incomplete Gamma function (see \cite{dingle, olver}), the integral in \eqref{eq:absbound} is $\mathcal{O}(e^{-\ln^{5}n}\ln^{5(c+p)}n)$. The result in this case now follows.\\
If, on the other hand, $nt\gg1$, then the Stirling formula 
\[
\Gamma(s)=\exp\left((s-1/2)\Log s-s+\frac{1}{2}\ln2\pi+\mathcal{O}(s^{-1})\right)\qquad(|s|\gg1)
\]
and the fact $nt\ll\ln^{4} n$ imply that 
\begin{align*}
& |\Gamma(c+p+1+int)| \\
 & =\exp\left(\left(c+p+\frac{1}{2}\right)\ln|c+p+1+int|-nt\Arg(c+p+1+int)-(c+p+1)+\frac{1}{2}\ln2\pi+\mathcal{O}\left(\frac{1}{nt}\right)\right)\\
 & \gg\exp(-\pi\ln^{4}n),
\end{align*}
from which we deduce 
\[
\left|2i\sin\pi(c+p+1+int)\Gamma(c+p+1+int)\right|\gg\exp\text{\ensuremath{\left(n\pi t-\pi\ln^{4}n\right)}.}
\]
The claim follows from the fact that 
\[
\int_{\ln^{5}n\pm in\epsilon}^{+\infty\pm in\epsilon}|z|^{c+p}e^{-nt\Arg(-z)}e^{-\Re z}d|z|=\mathcal{O}\left(e^{nt\pi-\ln^{5}n}\ln^{5(c+p)}n\right).
\]
The proof of Lemma \ref{lem:smalltasympmainterm} is complete.
\end{proof}
We continue with the proof of Proposition \ref{prop:asympsmallt} by finding a bound for the second integral in \eqref{eq:splitintegral}:
\[
z_{1}\int_{ln^{5}n/n+i\epsilon}^{\infty+i\epsilon}\psi(z_{1}e^{z})e^{-n\phi(z_{1}e^{z},t)}e^{z}dz.
\]
Recall from equation \eqref{eq:integrand}
that 
\[
\psi(z_{1}e^{z})e^{-n\phi(z_{1}e^{z},t)}=\frac{h(z_{1}e^{z})}{z_{1}^{n+1}e^{(n+1)z}}\frac{(z_{1}-z_{1}e^{z})^{c}(z_{2}-z_{1}e^{z})^{c}}{(z_{1}+z_{1}e^{z})^{c}(z_{2}+z_{1}e^{z})^{c}}\left(\frac{(z_{1}-z_{1}e^{z})(z_{2}-z_{1}e^{z})}{(z_{1}+z_{1}e^{z})(z_{2}+z_{1}e^{z})}\right)^{nit}.
\]
With $z=u+i\epsilon$, $\ln^{5}n/n\le u<\infty$, we have 
\begin{align*}
|z_{2}-z_{1}e^{z}| & \ge|\Im(z_{2}-z_{1}e^{z})|\\
 & =z_{1}e^{u}\sin\epsilon\\
 & \gg\epsilon,
\end{align*}
and consequently, 
\[
(z_{2}-z_{1}e^{z})^{c}=\mathcal{O}\text{\ensuremath{\left(\frac{1}{\epsilon^{|c|}}+e^{u|c|}\right)}. }
\]
We conclude that 
\[
\frac{h(z_{1}e^{z})(z_{2}-z_{1}e^{z})^{c}(z_{1}-z_{1}e^{z})^{c}}{(z_{1}+z_{1}e^{z})^{c}(z_{2}+z_{1}e^{z})^{c}}=\begin{cases}
\mathcal{O}\left(\frac{1}{\epsilon^{B}}+\frac{n^{|c+p|}}{\ln^{5(c+p)}n}\right) & \text{if }z=\mathcal{O}(1)\\
\mathcal{O}(e^{Au}) & \text{if }z\gg1
\end{cases}
\]
for some constants $A$(depending on the degree of $h$) and $B$(depending
on $c$ and the number of poles of $h$ on $[z_{1},\infty)$).

We also note that 
\[
\left|\left(\frac{(z_{1}-z_{1}e^{z})(z_{2}-z_{1}e^{z})}{(z_{1}+z_{1}e^{z})(z_{2}+z_{1}e^{z})}\right)^{nit}\right|=\exp^{nt}\left(\Arg(z_{1}-z_{1}e^{z})+\Arg(z_{2}-z_{1}e^{z})-\Arg(z_{1}+z_{1}e^{z})-\Arg(z_{2}+z_{1}e^{z})\right),
\]
and that for $z=u+i\epsilon$ and $|z| \ll 1$,
\[
\Arg(z_{1}-z_{1}e^{z})+\Arg(z_{2}-z_{1}e^{z})-\Arg(z_{1}+z_{1}e^{z})-\Arg(z_{2}+z_{1}e^{z})=\Arg(-z)+\mathcal{O}(z).
\]
Thus, there exists a small $\delta$ (independent of $n$) such that if
$u<\delta$ then 
\[
\left|\left(\frac{(z_{1}-z_{1}e^{z})(z_{2}-z_{1}e^{z})}{(z_{1}+z_{1}e^{z})(z_{2}+z_{1}e^{z})}\right)^{nit}\right|<e^{n\pi t}.
\]
For other values of $u$, we note that geometrically 
\[
\left|\Arg(z_{1}-z_{1}e^{z})-\Arg(z_{1}+z_{1}e^{z})\right|
\]
is the sum of two angles of the triangle with vertices $0$, $z_{1}$,
and $(z_{1}+z_{1}e^{z})/2$, which is less than $\pi$. The same inequality
holds for 
\[
\left|\Arg(z_{2}-z_{1}e^{z})-\Arg(z_{2}+z_{1}e^{z})\right|.
\]
We conclude that for $u\ge\delta$, 
\[
\left|\left(\frac{(z_{1}-z_{1}e^{z})(z_{2}-z_{1}e^{z})}{(z_{1}+z_{1}e^{z})(z_{2}+z_{1}e^{z})}\right)^{nit}\right|<e^{2n\pi t}.
\]
In order to utilize these estimates, we break the range of integration of 
\[
\int_{\ln^{5}n/n+i\epsilon}^{+\infty+i\epsilon}z_{1}\psi(z_{1}e^{z})e^{-n\phi(z_{1}e^{z},t)}e^{z}dz
\]
into three pieces: (i) $\ln^{5}n/n<\Re z<\delta$, (ii) $\delta\le \Re z$ and $\Re z=\mathcal{O}(1)$
and (iii) $\Re z\gg1$. The integral over the first range is 
\[
\frac{e^{n\pi t}}{z_{1}^{n}}\mathcal{O}\left(\left(\frac{1}{\epsilon^{B}}+\frac{n^{|c|}}{\ln^{5c}n}\right)\int_{\ln^{5}n/n}^{\delta}e^{-nu}du\right)=\frac{e^{n\pi t}}{nz_{1}^{n}}\mathcal{O}\left(\frac{e^{-\ln^{5}n}}{\epsilon^{B}}+\frac{n^{|c|}}{\ln^{5c}n}e^{-\ln^{5}n}\right),
\]
the integral over the second range is 
\[
\frac{e^{2\pi nt}}{z_{1}^{n}}\mathcal{O}\left(\left(\frac{1}{\epsilon^{B}}+\frac{n^{|c|}}{\ln^{5c}n}\right)\frac{e^{-n\delta}}{n}\right),
\]
while the integral over the third range is 
\[
\frac{e^{2\pi nt}}{z_{1}^{n}}\mathcal{O}\left(\int_{C}^{\infty}e^{-(n+1)u+Au}du\right)=\frac{e^{2\pi nt}}{z_{1}^{n}}\mathcal{O}\left(\frac{1}{n}e^{-nC}\right)
\]
for some large constant $C$. We recall that $nt\ll\ln^{4}n$. If we choose $\epsilon$
so that in addition to satisfying the condition $n\epsilon=o(1)$ we also have
\[
\frac{1}{\epsilon^{B}}=\mathcal{O}\left(\exp\left(\frac{\ln^{5}n}{2}\right)\right),
\]
then 
\[
z_{1}\int_{\ln^{5}n/n+i\epsilon}^{+\infty+i\epsilon}\psi(z_{1}e^{z})e^{-n\phi(z_{1}e^{z},t)}e^{z}dz=\frac{e^{\pi nt}}{z_{1}^{n}}\mathcal{O}\left(\exp\left(-\frac{\ln^{5}n}{2}\right)\right).
\]
With a similar argument we obtain 
\[
z_{1}\int_{+\infty-i\epsilon}^{\ln^{5}n/n-i\epsilon}\psi(z_{1}e^{z})e^{-n\phi(z_{1}e^{z},t)}e^{z}dz=\frac{e^{\pi nt}}{z_{1}^{n}}\mathcal{O}\left(\exp\left(-\frac{\ln^{5}n}{2}\right)\right).
\]
From equations \eqref{eq:splitintegral}, \eqref{eq:mainint}, Lemma \ref{lem:smalltasympmainterm},
and the fact that 
\[
2i\sin\pi(c+p+1+int)\Gamma(c+p+1+int)\begin{cases}
\gg e^{-\ln^{4}n} & \text{if }e^{-\ln^{4}n}\ll nt=\mathcal{O}(1)\\
\gg\exp\text{\ensuremath{\left(n\pi t-\frac{\pi\ln^{4}n}{4}\right)}} & \text{if }1\ll nt\ll\ln^{4}n
\end{cases},
\]
we conclude that  
\[
\int_{+\infty}^{(z_{1}^{-})}\psi(z)e^{-n\phi(z,t)}dz\sim\frac{2i\sin\pi(c+p+1+int)\Gamma(c+p+1+int)z_{1}^{c+p+int}h_{p}(z_{2}-z_{1})^{c+int}}{z_{1}^{n}2^{c+int}z_{1}^{c+int}(z_{2}+z_{1})^{c+int}n^{c+p+1+int}}
\]
uniformly on $e^{-\ln^{4}n}\ll nt\ll\ln^{4}n$. The proof of Proposition \ref{prop:asympsmallt} is complete. 
\end{proof}
\begin{rem}
\label{rem:smallt}If $c+p\notin\mathbb{Z}^{+}$, we do not require
the condition $e^{-\ln^{4}n}\ll nt$ for the estimate in equation \eqref{eq:sinegammaprod}.
Thus the asymptotics in Proposition \ref{prop:asympsmallt} hold
for $nt\ll\ln^{4}n$ if $c+p\notin\mathbb{Z}^{+}$. 
\end{rem}

\subsection{The location of the zeros of $\{H_n\}_{n=0}^{\infty}$} \label{subsec:zerolocation}
With the asymptotic analysis complete, in this section we establish that for all $n\gg 1$, the zeros fo $H_n$ lie on the line $\Re s=c$ (c.f. Theorem \ref{thm:maintheorem}) except perhaps a finite number of real zeros, whose asymptotic locations we also identify. We begin with a technical, but crucial result.
\begin{lem}
\label{lem:changeargsmallt} Suppose $\tau_{1}$ and $\tau_{2}$ are
constant multiples of $e^{-\ln^{4}n}/n$ and $\ln^{4}n/n$ and $\alpha$
is the unique angle such that $-\pi<\alpha\le\pi$ and $(c+p+1/2)\pi=2k\pi+\alpha$
for $k\in\mathbb{Z}$. Let $g$ and $p$ be defined as in equation \eqref{eq:gzetadef}. Then 
\begin{itemize}
\item[(i)] $p(\zeta(t))\ne0$ for $\tau_{1}\le t\le\tau_{2}$, and 
\item[(ii)] $\displaystyle{
\Delta\arg_{\tau_{1}\le t\le\tau_{2}}p(\zeta(t))=\frac{1}{2}\lim_{\xi\rightarrow0}\Delta\arg_{\xi\le t\le\tau_{2}}g(\zeta(t))+\frac{|c+p|\pi}{2}+\eta}$,
\end{itemize}
where 
\begin{equation}
\eta=\begin{cases}
-\pi/2 & \text{if }c+p<0\\
0 & \text{ if }c+p\ge0\text{ and }\alpha=\pm\frac{\pi}{2}\\
-\alpha & \text{ if }c+p\ge0\text{ and }-\pi/2<\alpha<\pi/2\\
-\alpha-\pi & \text{ if }c+p\ge0\text{ and }-\pi<\alpha<-\pi/2\\
-\alpha+\pi & \text{ if }c+p\ge0\text{ and }\pi/2<\alpha\le\pi.
\end{cases}.\label{eq:etadef}
\end{equation}
\end{lem}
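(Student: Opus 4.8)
The plan is to treat the two assertions together by tracking the argument of $p(\zeta(t))$ as $t$ runs over the interval $[\tau_1,\tau_2]$, using the asymptotic formula of Proposition~\ref{prop:asympsmallt} as the master tool. Since $\tau_1$ and $\tau_2$ are constant multiples of $e^{-\ln^4 n}/n$ and $\ln^4 n/n$ respectively, the entire interval $[\tau_1,\tau_2]$ lies in the range $e^{-\ln^4 n}/n \ll t \ll \ln^4 n /n$ on which Proposition~\ref{prop:asympsmallt} provides a uniform asymptotic equivalence
\[
p(\zeta(t))\sim \frac{2i\pi h_p (z_2-z_1)^{c+int}}{z_1^{n-p}\,2^{c+int}(z_2+z_1)^{c+int}\,n^{c+p+1+int}\,\Gamma(-c-p-int)}.
\]
For part (i), I would observe that every factor on the right-hand side is nonzero: the numerator is a fixed nonzero constant times $(z_2-z_1)^{int}$, which has modulus $e^{-nt\arg(z_2-z_1)}=1$ since $z_2>z_1>0$; the powers $z_1^{n-p}$, $2^{c+int}$, $(z_2+z_1)^{c+int}$, $n^{c+p+1+int}$ are all nonzero; and $1/\Gamma(-c-p-int)$ is entire, so it only vanishes at the poles of $\Gamma$, i.e.\ when $-c-p-int \in \mathbb{Z}_{\le 0}$, which for $t>0$ forces $t=0$ — excluded. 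Hence the asymptotic main term is bounded away from $0$ uniformly on $[\tau_1,\tau_2]$ (one must check the lower bound is not swamped; this is exactly the content of the estimate \eqref{eq:sinegammaprod} and the Stirling estimate in the proof of Proposition~\ref{prop:asympsmallt}, which show $|2i\sin\pi(c+p+1+int)\Gamma(c+p+1+int)|$ — equivalently $1/|\Gamma(-c-p-int)|$ up to the reflection formula — stays $\gg e^{-\ln^4 n}$, dominating the error). Therefore $p(\zeta(t))\ne 0$ for $n\gg 1$ and all $t\in[\tau_1,\tau_2]$.

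For part (ii), the strategy is: compute $\Delta\arg_{\tau_1\le t\le \tau_2} p(\zeta(t))$ directly from the asymptotic formula, and separately compute $\lim_{\xi\to 0}\Delta\arg_{\xi\le t\le \tau_2} g(\zeta(t))$, then compare. Since $p(\zeta(t))$ equals the displayed main term times $(1+o(1))$ uniformly, and the $o(1)$ term cannot wind (its modulus stays below, say, $1/2$), we have $\Delta\arg p(\zeta(t)) = \Delta\arg\bigl[\text{main term}\bigr] + o(1)$ over $[\tau_1,\tau_2]$. Writing the main term as $K\cdot e^{int\,L}\cdot n^{-int}\cdot \Gamma(-c-p-int)^{-1}$ with $K$ a nonzero constant and $L=\log\frac{z_2-z_1}{2(z_2+z_1)}$ (with care about branches), the $t$-dependence splits into the elementary linear-in-$t$ phases — contributing $\mathcal{O}(n\tau_2)=\mathcal{O}(\ln^4 n)$ rotations, but these are \emph{continuous and of bounded total variation only in the crude sense}; the key point is that $\Delta\arg$ of $e^{int L}n^{-int}$ is exactly $n\tau_2(\Im L - \ln n) - n\tau_1(\Im L - \ln n)$, a quantity that must be shown to be absorbed or to cancel — and of the reciprocal Gamma factor. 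The reciprocal Gamma factor is where the constant $\alpha$ and the case split in $\eta$ enter: as $t$ ranges over $[\tau_1,\tau_2]$, the argument $-c-p-int$ traces a short vertical segment near the real point $-c-p$, and $\arg\Gamma(-c-p-int)^{-1}$ changes by an amount governed by whether $-c-p$ is a nonpositive integer (the $c+p\ge 0$ integer-adjacent cases) or not, and by the sign of $c+p$; using the reflection formula $\Gamma(s)\Gamma(1-s)=\pi/\sin\pi s$ I would convert $1/\Gamma(-c-p-int)$ into $\sin\pi(c+p+1+int)\Gamma(c+p+1+int)/\pi$ and read off the winding of the sine factor (which produces the $\pm\pi/2$ and $-\alpha$ adjustments via the location of $(c+p+1/2)\pi \bmod 2\pi$) and of the $\Gamma(c+p+1+int)$ factor (which, by $\arg\Gamma$ asymptotics, contributes the $\frac{|c+p|\pi}{2}$ term).

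On the $g$-side, recall $g(\zeta)=2\pi\psi^2(\zeta)e^{-2n\phi(\zeta,t)}/\bigl(n\phi_{z^2}(\zeta,t)\bigr)$, and by the Laplace/saddle-point relation established via $p^2(\zeta)\sim g(\zeta)$ on the overlapping range (stated in the excerpt), one has $g(\zeta(t))\sim p^2(\zeta(t))$ wherever both asymptotics are valid; pushing $\xi\to 0$ and matching on the overlap $e^{-\ln^4 n}/n \ll t$, $\lim_{\xi\to 0}\Delta\arg_{\xi\le t\le\tau_2} g(\zeta(t))$ equals $2\Delta\arg_{0^+\le t\le \tau_2} p(\zeta(t))$ plus a boundary correction coming from the behavior of $g(\zeta(t))$ as $t\to 0^+$, where $\zeta(t)\to\zeta(0)$ and $\phi_{z^2}$, $\psi$ have definite limiting phases — this is precisely the origin of the constant terms $\frac{|c+p|\pi}{2}+\eta$: the factor $\tfrac12$ in the lemma converts the $2\Delta\arg p$ back, and the discrepancy between "$\Delta\arg$ from $\tau_1$" and "$\Delta\arg$ from $0^+$" is $o(1)$ by part (i)'s uniform non-vanishing, so the only genuine contribution is the limiting-phase bookkeeping of $\psi^2/\phi_{z^2}$ versus the Gamma factor. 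I expect the main obstacle to be exactly this branch-and-winding bookkeeping: keeping consistent branch choices for $\Log$ in $\phi$, for the fractional powers in $\psi$, and for $\arg\Gamma$ along the vertical segment, and verifying that the rapidly-oscillating linear phases $e^{int L}, n^{-int}$ contribute identically (and hence cancel in the difference of $\Delta\arg$'s) on the $p$ and $g$ sides — so that after cancellation only the advertised constants $\frac{|c+p|\pi}{2}+\eta$ survive. Once the casework on $\alpha=\arg$-residue of $(c+p+1/2)\pi$ is organized as in \eqref{eq:etadef}, each case is a finite computation with $\arg$ of an explicit sine and an explicit Gamma value.
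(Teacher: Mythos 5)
Your part (i) and your treatment of the $p$-side of part (ii) follow the paper's route: read off $\Delta\arg p(\zeta(t))$ factor by factor from the asymptotic formula of Proposition \ref{prop:asympsmallt}, using the reflection formula and Stirling's formula to handle $\Gamma(-c-p-int)$, with the case split on $\alpha$ producing $\eta$ and the $\arg\Gamma$ asymptotics producing $\frac{|c+p|\pi}{2}$. (You do leave that casework as ``a finite computation,'' which is where most of the actual work in the paper's proof lives, but the plan there is sound.)

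The genuine gap is in your evaluation of $\lim_{\xi\to 0}\Delta\arg_{\xi\le t\le\tau_2} g(\zeta(t))$. You propose to obtain it from $g(\zeta)\sim p^2(\zeta)$ ``on the overlapping range'' plus a boundary correction as $t\to 0^+$. But that equivalence is only established for $t\gg \ln^4 n/n$ (the intervals $I_1$, $I_2$, $I_3$ all have $\ln^{4}n/n\ll t$ as their lower constraint), i.e.\ strictly above $\tau_2$; there is no overlap inside $[\tau_1,\tau_2]$, let alone down to $\xi\to 0$. Indeed, the breakdown of $p^2\sim g$ as the saddle point $\zeta(t)$ approaches the branch point $z_1$ is precisely why the lemma carries the nontrivial constants $\frac{|c+p|\pi}{2}+\eta$ rather than being the tautology $\Delta\arg p=\frac12\Delta\arg g+o(1)$, so your mechanism cannot produce the $g$-side quantity. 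The paper instead computes $\lim_{\xi\to0}\Delta\arg_{\xi\le t\le\tau_2}g(\zeta(t))$ directly from the explicit formula \eqref{eq:gzetadef}: the even factor $h^{2}(\zeta)(z_{1}-\zeta)^{2c}(z_{2}-\zeta)^{2c}(z_{1}+\zeta)^{-2c}(z_{2}+\zeta)^{-2c}$ contributes only $o(1)$ to the argument because $z_{1}-\zeta(t)=-iz_{1}t+\mathcal{O}(t^{2})$, and the remaining factors are evaluated as in Lemma 39 of \cite{cfkt}, giving $2n\tau_{2}\ln\frac{\tau_{2}(z_{2}-z_{1})}{2(z_{1}+z_{2})}-2n\tau_{2}+\frac{\pi}{2}+o(1)$. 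Only once both sides are in closed form can one check that the large oscillatory terms $n\tau_{2}\ln(\cdot)-n\tau_{2}$ agree (up to the factor $2$) and that the leftover constant is exactly $\frac{|c+p|\pi}{2}+\eta$; you correctly flag this cancellation as the crux, but you do not supply an argument for it.
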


\begin{proof}
The fact that $p(\zeta(t))\ne0$ for $\tau_{1}\le t\le\tau_{2}$ follows
immediately from Proposition \pageref{prop:asympsmallt}.
To establish the claim regarding the change of arguments, we start by recalling that
\[
g(\zeta)=\frac{2\pi\psi^{2}(\zeta)e^{-2n\phi(\zeta)}}{n\phi_{z^{2}}(\zeta,t)}=\frac{2\pi}{n\phi_{z^{2}}(\zeta,t)}\cdot\frac{h^{2}(\zeta)}{z^{2n+2}}\frac{(z_{1}-z)^{2c}(z_{2}-z)^{2c}}{(z_{1}+z)^{2c}(z_{2}+z)^{2c}}\left(\frac{(z_{1}-z)(z_{2}-z)}{(z_{1}+z)(z_{2}+z)}\right)^{2nit}.
\]
Since $z_{1}-z=-iz_{1}t+\mathcal{O}(t^{2})$, on $\xi\le t\le\tau_{2}$ the change in the argument of 
\[
h^{2}(\zeta)\frac{(z_{1}-z)^{2c}(z_{2}-z)^{2c}}{(z_{1}+z)^{2c}(z_{2}+z)^{2c}}=(z_1-z)^{2 p^*}H(t) \sim (-z_1^2t^2)^{p^*}+\mathcal{O}(t^3)
\]
 is $o(1)$. Thus, using the same
computations in the proof of Lemma 39 in \cite{cfkt}, we conclude that 
\begin{equation} \label{eq:argest}
\lim_{\xi\rightarrow0}\Delta\arg_{\xi\le t\le\tau_{2}}g(\zeta(t))=2n\tau_{2}\ln\frac{\tau_{2}(z_{2}-z_{1})}{2(z_{1}+z_{2})}-2n\tau_{2}+\frac{\pi}{2}+o(1).
\end{equation}

 For $\tau_{1}\le t\le\tau_{2}$, the change
of the arguments of the factors $2^{c+int}$, $(z_{2}-z_{1})^{c+int}$,
$(z_{2}+z_{1})^{c+int}$, and $e^{(c+p+1+int)\ln n}$ in the expression
\[
\frac{2i\pi h_{p}(z_{2}-z_{1})^{c+int}}{z_{1}^{n-p}2^{c+int}(z_{2}+z_{1})^{c+int}n^{c+p+1+int}\Gamma(-c-p-int)}
\]
 are $n(\tau_{2}-\tau_{1})\ln2$, $n(\tau_{2}-\tau_{1})\ln(z_{2}-z_{1})$,
$n(\tau_{2}-\tau_{1})\ln(z_{2}+z_{1})$, and $n(\tau_{2}-\tau_{1})\ln n$
respectively.

We next compute the change in argument of the factor $\Gamma(-c-p-int)$
, $\tau_{1}\le t\le\tau_{2}$, which is given by the expression 
\[
\left.\Im\Log\Gamma(-c-p-int)\right|_{\tau_{1}}^{\tau_{2}},
\]
where the function $\Log\Gamma(s)$ is defined as 
\[
\Log\Gamma(s)=-\gamma s-\Log s+\sum_{k=1}^{\infty}\left(\frac{s}{k}-\Log(1+s/k)\right).
\]
Using the Stirling formula, 
\[
\Log\Gamma(s)\sim(s-1/2)\Log s-s+1/2\Log(2\pi)+\mathcal{O}(1/s),\qquad\text{for }|s|\rightarrow\infty\text{ and }|\Arg s|\le\pi-\delta,
\]
we conclude that 
\begin{align*}
 & \Im\Log\Gamma(-c-p-in\tau_{2})\\
= & \Im\left((-c-p-1/2-in\tau_{2})\Log(-c-p-in\tau_{2}\right)+n\tau_{2}+\mathcal{O}(1/\ln^{4}n)\\
= & -n\tau_{2}\ln|c+p+in\tau_{2}|+(c+p+1/2)\frac{\pi}{2}+n\tau_{2}+\mathcal{O}(1/\ln^{4}n).
\end{align*}
Employing the estimate
\[
\ln|c+p+in\tau_{2}|=\ln \left| in \tau_2\left(1+\frac{c+p}{i n \tau_2}\right) \right|=\ln(n\tau_{2})+\mathcal{O}\left(\frac{1}{n^{2}\tau_{2}^{2}}\right),
\]
the last expression becomes 
\[
-n\tau_{2}\ln(n\tau_{2})+(c+p+1/2)\frac{\pi}{2}+n\tau_{2}+\mathcal{O}\left(\frac{1}{\ln^{4}n}\right).
\]
If $-c-p>0$, then the fact that $n\tau_{1}\asymp e^{-\ln^{4}n}$
implies that 
\[
\Im\Log\Gamma(-c-p-in\tau_{1})=\mathcal{O}\left(e^{-\ln^{4}n}\right),
\]
and consequently 
\[
\Delta\arg_{\tau_{1}\le t\le\tau_{2}}\Gamma(-c-p-int)=-n\tau_{2}\ln(n\tau_{2})+(c+p+1/2)\frac{\pi}{2}+n\tau_{2}+\mathcal{O}\left(\frac{1}{\ln^{4}n}\right).
\]
If, on the other hand, if $c+p\ge0$, then the identity
\[
\Gamma(-c-p-int)=\frac{\pi}{\sin\pi(c+p+1+int)\Gamma(c+p+1+int)}
\]
implies that 
\[
\Delta\arg_{\tau_{1}\le t\le\tau_{2}}\Gamma(-c-p-int)=-\Delta\arg_{\tau_{1}\le t\le\tau_{2}}\sin\pi(c+p+1+int)-\Delta\arg_{\tau_{1}\le t\le t_{2}}\Gamma(c+p+1+int).
\]
Using the conjugate of the gamma function, we write
\[
-\Delta\arg_{\tau_{1}\le t\le t_{2}}\Gamma(c+p+1+int)=\Delta\arg_{\tau_{1}\le t\le\tau_{2}}\Gamma(c+p+1-int)=-n\tau_{2}\ln(n\tau_{2})-(c+p+1/2)\frac{\pi}{2}+n\tau_{2}+\mathcal{O}\left(\frac{1}{\ln^{4}n}\right).
\]
Analyzing the change in the argument of $\sin \pi(c+p+int)$ requires further considerations. To this end, recall that 
\[
\sin\pi(c+p+1+int)=\frac{e^{-\pi nt+i\pi(c+p+1)}-e^{\pi nt-i\pi(c+p+1)}}{2i}=\frac{-e^{\pi nt-i\pi(c+p+1)}}{2i}\left(e^{-2 \pi nt} +1\right),
\]
and whence
\[
\Im\sin\pi(c+p+1+int)=\frac{1}{2}\left(e^{-\pi nt}-e^{\pi nt}\right)\cos\pi(c+p+1).
\]
It is immediate that if $c+p+1/2\in\mathbb{Z}$, then 
\[
\Delta\arg_{\tau_{1}\le t\le\tau_{2}}\sin\pi(c+p+1+int)=0.
\]
On the other hand, if $c+p+1/2\notin\mathbb{Z}$, then $\sin\pi(c+p+1+int)\notin\mathbb{R}$,
and
\[
\Delta\arg_{\tau_{1}\le t\le\tau_{2}}\sin\pi(c+p+1+int)=\Arg\sin\pi(c+p+1+in\tau_{2})-\Arg\sin\pi(c+p+1+in\tau_{1}).
\]
We write
\begin{align*}
\Arg\sin\pi(c+p+1+in\tau_{2}) & =\Arg\left(\frac{-e^{\pi n\tau_{2}-i\pi(c+p+1)}}{2i}\right)+\mathcal{O}\left(e^{-2\pi n\tau_{2}}\right)\\
 & =-\alpha+\mathcal{O}(e^{-2\pi\ln^{4}n}),
\end{align*}
where $\alpha$ is the unique angle such that $-\pi<\alpha\le\pi$
and $(c+p+1/2)\pi=2k\pi+\alpha$ for $k\in\mathbb{Z}$. Note that
$\alpha$ is given explicitly by the formula 
\[
\alpha=((c+p+3/2)\pi\mod2\pi)-\pi.
\]
If $c+p\in\mathbb{Z}$, then the Taylor expansion of the sine function
yields
\[
\Arg\sin\pi(c+p+1+in\tau_{1})=(-1)^{c+p+1}\frac{\pi}{2}+\mathcal{O}\left(e^{-\ln^{4}n}\right).
\]
If $c+p\notin\mathbb{Z}$, then 
\[
\Arg\sin\pi(c+p+1+in\tau_{1})=\begin{cases}
\mathcal{O}\left(e^{-\ln^{4}n}\right) & \text{ if }\sin\pi(c+p+1)>0\\
\pi & \text{ if }\sin\pi(c+p+1)<0\text{ and }\cos\pi(c+p+1)>0\\
-\pi & \text{ if }\sin\pi(c+p+1)<0\text{ and }\cos\pi(c+p+1)<0.
\end{cases}
\]
Combining these cases we conclude for any $c,p\in\mathbb{R}$,
\[
\Delta\arg_{\tau_{1}\le t\le\tau_{2}}\Gamma(-c-p-int)=-n\tau_{2}\ln(n\tau_{2})+n\tau_{2}-\frac{|c+p|\pi}{2}-\frac{\pi}{4}-\eta+\mathcal{O}\left(\frac{1}{\ln^{4}n}\right),
\]
and finally,
\[
\Delta\arg_{\tau_{1}<t\le\tau_{2}}p(\zeta(t))=n\tau_{2}\ln\frac{(z_{2}-z_{1})\tau_{2}}{2(z_{2}+z_{1})}-n\tau_{2}+\frac{|c+p|\pi}{2}+\frac{\pi}{4}+\eta+\mathcal{O}\left(\frac{1}{\ln^{4}n}\right).
\]
Given equation \eqref{eq:argest}, the result now follows. 
\end{proof}
We next identify a suitable curve on which we will compute the change of argument of $g$. Let $\gamma$ be the simple closed curve with counter clockwise orientation
formed by the traces of $\zeta(t),\overline{\zeta(t)},-\zeta(t),$
and $-\overline{\zeta(t)}$ for $0\le t\le T$ and small deformations
around 
\begin{equation}
\begin{cases}
\pm i\sqrt{z_{1}z_{2}},\pm\zeta(T_{1}),\pm\overline{\zeta(T_{1})} & \text{ if }z_{1}^{2}-6z_{1}z_{2}+z_{2}^{2}<0\\
\pm i\zeta(T) & \text{ if }z_{1}^{2}-6z_{1}z_{2}+z_{2}^{2}\ge0
\end{cases}\label{eq:zerosphi''}
\end{equation}
such that the region enclosed by $\gamma$ contains the points defined
in \eqref{eq:zerosphi''}. We also deform $\gamma$ around $\pm z_{1}$
so that the cuts $(-\infty,-z_{1}]$ and $[z_{1},\infty)$ lie outside
this region (see Figure \ref{fig:gammacurve}). Using the residue theorem
(for detailed computations, see \cite{cfkt} equation (2.86)), we
find that
\[
\frac{1}{2\pi i}\int_{\gamma}\frac{g'(\zeta)}{g(\zeta)}d\zeta=\begin{cases}
-2n-6 & \text{ if }z_{1}^{2}-6z_{1}z_{2}+z_{2}^{2}<0\\
-2n-2 & \text{ if }z_{1}^{2}-6z_{1}z_{2}+z_{2}^{2}\ge0
\end{cases},
\]
since the values of $c$ and $p$ do not affect the integral.

\begin{figure}
\begin{centering}
\includegraphics[scale=0.3]{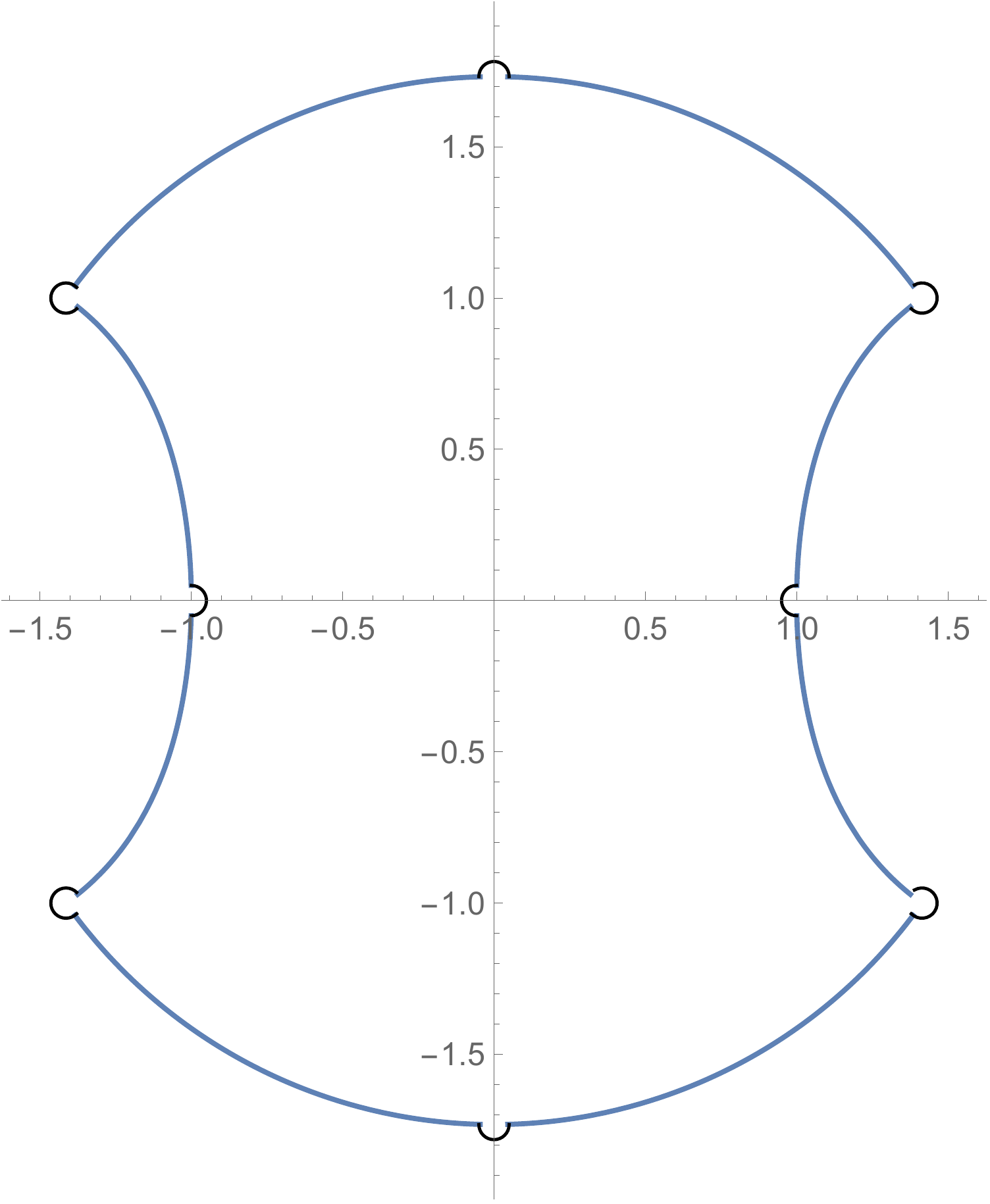} $\qquad$\includegraphics[scale=0.3]{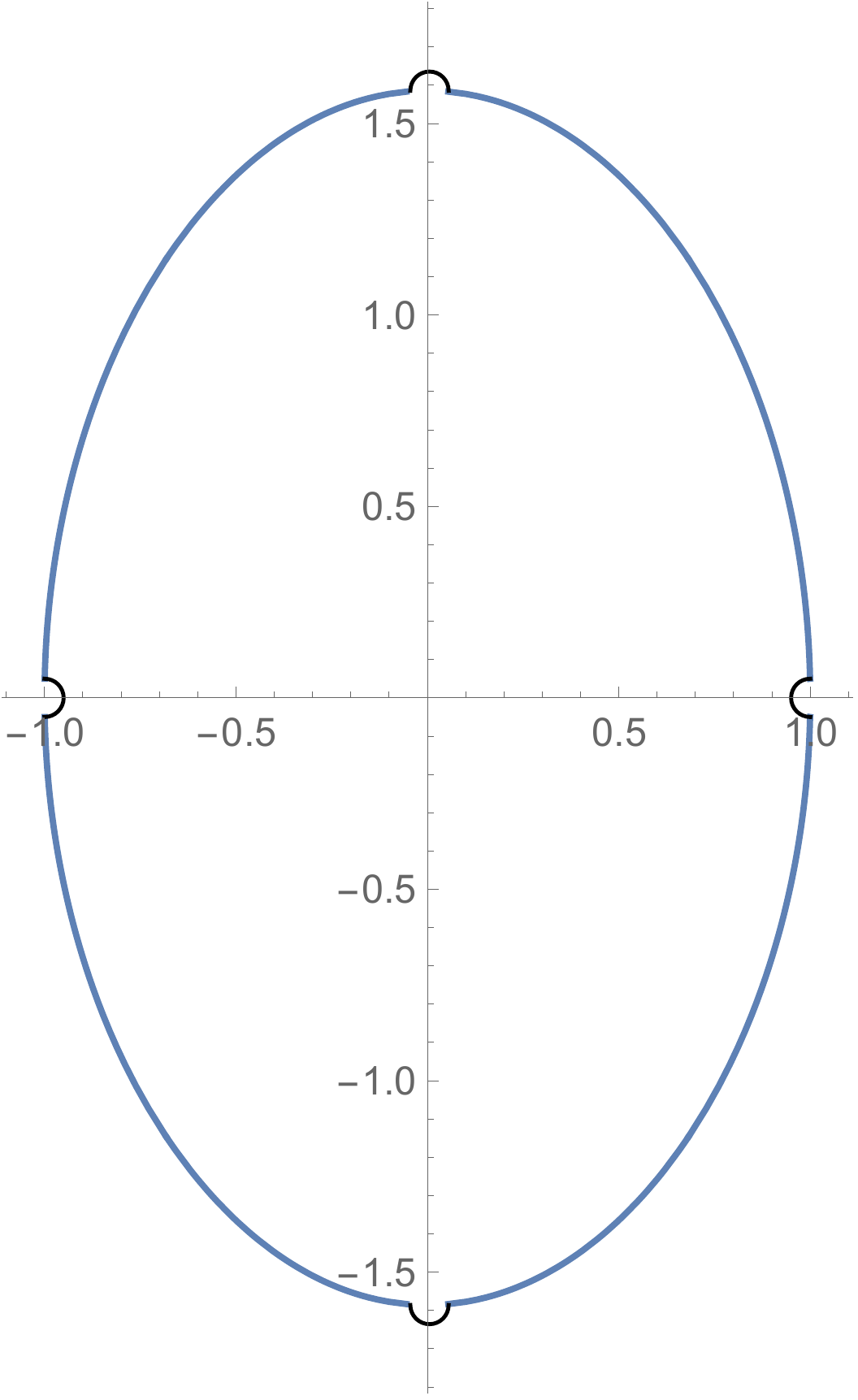} 
\par\end{centering}
\caption{\label{fig:gammacurve}The curve $\gamma$ for $(z_{1},z_{2})=(1,3)$
(left) and $(1,7)$ (right)}
\end{figure}

Let $\gamma_{1}$ be the portion of $\gamma$ in the first quadrant.
Exploiting the symmetry $g(\overline{\zeta})=\overline{g(\zeta)}$ and
$g(-\zeta)=g(\zeta)$, we conclude that 
\[
\Delta_{\gamma_{1}}\arg g(\zeta)=\frac{\Delta_{\gamma}\arg g(\zeta)}{4}=\begin{cases}
-(n+3)\pi & \text{ if }z_{1}^{2}-6z_{1}z_{2}+z_{2}^{2}<0\\
-(n+1)\pi & \text{ if }z_{1}^{2}-6z_{1}z_{2}+z_{2}^{2}\ge0
\end{cases}.
\]

\begin{rem}
\label{rem:changearg}In the case $t\rightarrow T$ and $t\rightarrow T_{1}$
(when $T\ne T_{1})$ , the values $c$ and $p$ only affect the change
in argument of $g(\zeta(t))$ by $o(1)$. Thus the following results
follow directly from \cite{cfkt}. 
\end{rem}

\begin{enumerate}
\item If $\tau<T$ such that $T-\tau\ll1/n^{2/3}$, then by Lemma 40 in \cite{cfkt},
\[
\lim_{\xi\rightarrow0}\Delta\arg_{\tau\le t\le T-\xi}g(\zeta(t))\ll1.
\]
\item If $T=T_{2}$ and $|T_{1}-\tau|\ll1/n$, then by Lemmas 41 and 42 in \cite{cfkt},
\begin{align*}
\lim_{\xi\to0}\Delta_{T_{1}+\xi<t<\tau}\arg g(\zeta(t)) & \ll1, \quad \textrm{and}\\
\lim_{\xi\rightarrow0}\Delta_{\tau\le t<T_{1}-\xi}\arg g(\zeta(t)) & \ll1.
\end{align*}
\item Lemma 37 in \cite{cfkt} shows that there exists some $|C|<\pi/2+o(1)$, such that
\[
\Delta_{\ln^{2}n/n^{2/3}<T-t\ll1/n^{2/3}}\arg p(\zeta(t)=\frac{1}{2}\Delta_{\ln^{2}n/n^{2/3}<T-t\ll1/n^{2/3}}\arg g(\zeta(t))+C.
\]
\item If $T=T_{2}$ and $p(\zeta(T_{1}))\ne0$, then $p(\zeta(t))\ne0$
on $(T_{1}-\ln^{2}n/n^{2/3},T_{1}+\ln^{2}n/n^{2/3})$ and by Lemma 38 in \cite{cfkt}, there exists some $|C|<\pi+o(1)$ such that
\begin{align*}
 & \Delta_{T_{1}-\ln^{2}n/n^{2/3}<t<T_{1}+\ln^{2}n/n^{2/3}}\arg p(\zeta(t))\\
= & \frac{1}{2}\Delta_{\ln^{2}n/n^{2/3}<T_{1}-t\ll1/n}\arg g(\zeta)+\frac{1}{2}\Delta_{1/n\ll t-T_{1}<\ln^{2}n/n^{2/3}}\arg g(\zeta)+C.
\end{align*}
\item An argument completely analogous to that on p.55 in \cite{cfkt} shows that the change in the argument of $g(\zeta(t))$ on the small arcs of $\gamma_{1}$
around $\zeta(T)$ and $\zeta(T_{1})$ (when $T\ne T_{1}$) are $-\pi/2$
and $-3\pi/2$ respectively . 
\end{enumerate}
Finally, as $\zeta\rightarrow z_{1}$, 
\[
g(\zeta)=\frac{2\pi\psi^{2}(\zeta)e^{-2n\phi(\zeta)}}{n\phi_{z^{2}}(\zeta)}\sim c_{1}(z_{1}-\zeta)^{2c+2p+1},\qquad(c_{1}\in\mathbb{C} \setminus \{0\}),
\]
and 
\[
\exp\left(2n(\zeta-z_{1})(z_{1}-z_{2})\Log(z_{1}-\zeta)/z_{1}\right)\rightarrow1.
\]
Thus the change in argument of $g(\zeta)$ on the small arc of $\gamma_{1}$
around $z_{1}$ is $-(c+p+1/2)\pi+o(1)$. \\
In contrast to the polynomials discussed in \cite{cfkt} -- which have at most two real zeros -- the family of polynomials we are treating in the current paper can have several real zeros, whose location changes with $n$. The next result gives a lower bound on the number of real zeros of $H_n$ if $c+p>0$, and also describes the asymptotic behavior of these zeros as $n \to \infty$.

\begin{lem}
\label{lem:realzeros} Let $\{H_n\}_{n=0}^{\infty}$ be as in the statement of Theorem \ref{thm:maintheorem}. If $c+p>0$, then $H_{n}(s)$ has at least $2\left\lceil c+p\right\rceil $ many real zeros
which approach $c\pm(c+p+1-k)$, $0<k\le\left\lceil c+p\right\rceil $ as $n \to \infty$.
\end{lem}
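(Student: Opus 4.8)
The plan is to recover the real zeros of $H_{n}$ as the (approximate) zeros of the reciprocal Gamma factor in the small-$t$ asymptotics of Proposition \ref{prop:asympsmallt}. Recall from \eqref{eq:Hintegral} and the substitution $z\mapsto -z$ that $H_{n}(c+int)=\frac{n!}{2\pi i}\Bigl(p(\zeta(t))+(-1)^{n}p(\zeta(-t))\Bigr)$. A point $s=c+int$ is real exactly when $int$ is real, i.e. when $t$ is purely imaginary; and if $s$ ranges over a bounded set then $t=-i(s-c)/n$ has $|t|\asymp 1/n$, so $t$ lies in the annulus $e^{-\ln^{4}n}/n\ll|t|\ll\ln^{4}n/n$ on which Proposition \ref{prop:asympsmallt} is valid (that its proof applies to such $t$ is discussed below). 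Writing $w=int=s-c\in\mathbb R$ and $B:=(z_{2}-z_{1})/\bigl(2(z_{2}+z_{1})n\bigr)>0$, the proposition reads
\[
p(\zeta(t))\sim\frac{2i\pi h_{p}\,B^{\,c+w}}{z_{1}^{\,n-p}\,n^{\,c+p+1}\,\Gamma(-c-p-w)},
\]
with the companion formula for $p(\zeta(-t))$ obtained by replacing $w$ with $-w$. Since $h$ and $f$ have real Taylor coefficients, $H_{n}\in\mathbb R[s]$; and since $h(z)e^{cf(z)}$ is even and $f$ is odd, $H_{n}(2c-s)=(-1)^{n}H_{n}(s)$. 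It therefore suffices to produce $\lceil c+p\rceil$ distinct real zeros of $H_{n}$ to the left of $\Re s=c$ and invoke this reflection for the remaining $\lceil c+p\rceil$.

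Fix an integer $k$ with $1\le k\le\lceil c+p\rceil$, set $\rho_{k}:=c+p+1-k$ (a fixed positive constant, using $c+p>0$ and $\lceil c+p\rceil<c+p+1$), and let $s_{k}:=c-\rho_{k}=k-p-1<c$. At $w=-\rho_{k}$ the argument of the Gamma factor in $p(\zeta(t))$ equals $-c-p+\rho_{k}=1-k\le 0$, a pole of $\Gamma$, so $1/\Gamma(-c-p-w)$ has a simple zero there; reading off the residue, $1/\Gamma(-c-p-w)=(-1)^{k}(k-1)!\,(w+\rho_{k})\bigl(1+O(w+\rho_{k})\bigr)$. A size comparison shows that on the shrinking window $|w+\rho_{k}|\asymp n^{-\rho_{k}}$ the term $p(\zeta(t))$ dominates both $p(\zeta(-t))$ (their ratio there is $\asymp n^{-2\rho_{k}}/|w+\rho_{k}|\to 0$) and the additive error in Proposition \ref{prop:asympsmallt}. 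Hence at the real points $w=-\rho_{k}\pm n^{-\rho_{k}}$,
\[
H_{n}(c+w)=\frac{n!\,h_{p}\,B^{\,c+w}}{z_{1}^{\,n-p}\,n^{\,c+p+1}}\,(-1)^{k}(k-1)!\,(w+\rho_{k})\,\bigl(1+o(1)\bigr),
\]
a real quantity of sign $\operatorname{sgn}\bigl((-1)^{k}h_{p}\bigr)\cdot\operatorname{sgn}(w+\rho_{k})$ (note $h_{p}\neq 0$), so it takes opposite signs at the two endpoints. Since $w\mapsto H_{n}(c+w)$ is real and continuous, the intermediate value theorem yields a real zero $s_{k}^{(n)}$ with $|s_{k}^{(n)}-s_{k}|<n^{-\rho_{k}}\to 0$. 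The $\rho_{k}$ are spaced one apart, so for $n\gg1$ these windows are pairwise disjoint, giving $\lceil c+p\rceil$ distinct real zeros converging to $c-(c+p+1-k)$, $1\le k\le\lceil c+p\rceil$. Applying $H_{n}(2c-s)=(-1)^{n}H_{n}(s)$ produces $\lceil c+p\rceil$ further real zeros converging to $c+(c+p+1-k)$; none of these $2\lceil c+p\rceil$ points equals $c$, so all are distinct, which proves the lemma.

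The main work lies in two places. First, Proposition \ref{prop:asympsmallt} is stated for real $t$, so one must check that its proof survives for purely imaginary $t$ with $|nt|=\rho_{k}$: in the splitting of \eqref{eq:splitintegral} the factors $e^{\pi nt}$ and $e^{2\pi nt}$ become quantities of modulus one, and the inequality $|nt|=\rho_{k}<c+p+1$ is exactly what is needed for convergence of the Hankel integral in Lemma \ref{lem:smalltasympmainterm}, so the argument is unaffected. Second, one must make the error bookkeeping behind the displayed formula for $H_{n}(c+w)$ precise: on the window $|w+\rho_{k}|\asymp n^{-\rho_{k}}$ the leading term has size $\asymp n!\,z_{1}^{-n}n^{-2c-p-1}$, while the additive error from Proposition \ref{prop:asympsmallt} is $O(n!\,z_{1}^{-n}e^{-\ln^{5}n/2})$ and the $p(\zeta(-t))$ contribution is smaller by the factor $n^{-\rho_{k}}$, so the sign of $H_{n}(c+w)$ at $w=-\rho_{k}\pm n^{-\rho_{k}}$ is governed by the leading linear term; this is the crux. (For the finite, degenerate set of parameters for which $-c-p-\rho_{k}$ is also a non-positive integer and $1/\Gamma(-c-p+w)$ acquires an extra simple zero at $w=-\rho_{k}$, the same estimates apply a fortiori, since this only strengthens the dominance of $p(\zeta(t))$.)
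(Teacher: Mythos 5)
Your argument is correct and is essentially the paper's: both reduce to the real-shift asymptotics of Proposition \ref{prop:asympsmallt} (the paper invokes Remark \ref{rem:smallt} at $t=0$ with $c$ replaced by $c\pm x$, which gives literally the same integrand as your ``purely imaginary $t$''), locate the sign changes of the factor $\sin\pi(c+p+1\pm x)\,\Gamma(c+p+1\pm x)=\pi/\Gamma(-c-p\mp x)$ near $x=\mp(c+p+1-k)$, check that the term carrying that factor dominates the companion term, and conclude with the intermediate value theorem together with the reflection $H_n(c-x)=(-1)^nH_n(c+x)$. The only differences are technical: you work on shrinking windows of width $\asymp n^{-\rho_k}$ (which forces the tighter dominance comparison you correctly flag as the crux, and in return yields a rate of convergence), whereas the paper uses fixed intervals $J_k$ of width $2\delta$, on which the dominant term is bounded below by a constant multiple of $n^{-(c+p-x+1)}$ and the dominance is immediate.
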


\begin{proof}
For $x\in\mathbb{R}$, the Cauchy integral formula yields 
\begin{align}
H_{n}(c+x) & =\frac{n!}{2\pi i}\ointctrclockwise_{|z|=\epsilon}\frac{h(z)}{z^{n+1}}\frac{(z_{1}-z)^{c}(z_{2}-z)^{c}}{(z_{1}+z)^{c}(z_{2}+z)^{c}}\left(\frac{(z_{1}-z)(z_{2}-z)}{(z_{1}+z)(z_{2}+z)}\right)^{x}dz\label{eq:Cauchyreal}\\
 & =\frac{n!}{2\pi i}\int_{\Gamma_{1}\cup\Gamma_{2}}\frac{h(z)}{z^{n+1}}\frac{(z_{1}-z)^{c}(z_{2}-z)^{c}}{(z_{1}+z)^{c}(z_{2}+z)^{c}}\left(\frac{(z_{1}-z)(z_{2}-z)}{(z_{1}+z)(z_{2}+z)}\right)^{x}dz,\nonumber 
\end{align}
where $\Gamma_{1}$ and $\Gamma_{2}$ are two loops around two cuts
$(-\infty,-z_{1}]$ and $[z_{1},\infty)$ oriented counter clockwise. Using the substitution $z\mapsto-z$ and the fact
that 
\[
h(z)\frac{(z_{1}-z)^{c}(z_{2}-z)^{c}}{(z_{1}+z)^{c}(z_{2}+z)^{c}}
\]
is an even function, we see that the integral over $\Gamma_{1}$ is equal to
\[
(-1)^{n}\int_{\Gamma_{2}}\frac{h(z)}{z^{n+1}}\frac{(z_{1}-z)^{c-x}(z_{2}-z)^{c-x}}{(z_{1}+z)^{c-x}(z_{2}+z)^{c-x}}dz.
\]
We apply Remark \ref{rem:smallt} to $t=0$ and $c$ replaced by $c+x$
to conclude that if $c+p+x\notin\mathbb{Z}^{+}$, then the integral
over $\Gamma_{2}$ is asymptotic to 
\[
\frac{2ih_{p}(z_{2}-z_{1})^{c+x}\sin\pi(c+p+x+1)\Gamma(c+p+x+1)}{z_{1}^{n-p}2^{c+x}(z_{2}+z_{1})^{c+x}n^{c+p+x+1}}.
\]
With the same application to the case when $c$ replaced by $c-x$, we
conclude that 
\begin{align}
H_{n}(c+x) & \sim\frac{n!h_{p}(z_{2}-z_{1})^{c+x}\sin\pi(c+p+x+1)\Gamma(c+p+x+1)}{\pi z_{1}^{n-p}2^{c+x}(z_{2}+z_{1})^{c+x}n^{c+p+x+1}}\nonumber \\
 & +(-1)^{n}\frac{n!h_{p}(z_{2}-z_{1})^{c-x}\sin\pi(c+p-x+1)\Gamma(c+p-x+1)}{\pi z_{1}^{n-p}2^{c-x}(z_{2}+z_{1})^{c-x}n^{c+p-x+1}}\label{eq:hnc+x}
\end{align}
if $c+p\pm x\notin\mathbb{Z}^{+}$ and $x\ne0$. For any small fixed
$\delta>0$ (independent of $n$), we consider the intervals
\begin{equation}
J_{k}=[c+p+1-k-\delta,c+p+1-k+\delta],\qquad0<k<c+p+1-\delta.\label{eq:J_kint}
\end{equation}
For each $k$, the values of $\sin(c+p-x+1)$ when $x$
is at the endpoints of $J_{k}$ are $(-1)^{k-1}\sin\delta$ and $(-1)^{k}\sin\delta$.
Also at these endpoints $c+p\pm x\notin\mathbb{Z}^{+}$ (for small
$\delta)$, $\Gamma(c+p-x+1)>0$, and the second term of \eqref{eq:hnc+x}
dominates the first term when $n$ is large. Thus, by the Intermediate
Value Theorem, each interval $J_{k}$ contain at least one zero of $H_{n}(c+x)$.
We deduce that $H_{n}(c+x)$ has at least $\left\lceil c+p\right\rceil $positive
real zeros. The substitutions $z$ by $-z$ and $x$ by $-x$ in equation \eqref{eq:Cauchyreal} yield
\[
H_{n}(c-x)=(-1)^{n}H_{n}(c+x).
\]
The result now follows from the fact that if $x$ is a real zero
of $H_{n}(c+x)$, then so is $-x$. 
\end{proof}
We now turn our attention to the proof of Theorem \ref{thm:maintheorem}.
In addition to the number of real zeros of $H_{n}(s)$ given in Lemma \ref{lem:realzeros},
we will count the number of zeros of $H_{n}(c+nit)$ on $t\in(0,T)$
and compare this number with the degree of $H_{n}(s)$. We start with
a lemma concerning the degree of $H_n(s)$. 
\begin{lem}
\label{lem:degree} Let $\{H_n\}_{n=0}^{\infty}$ be defined as in Theorem \ref{thm:maintheorem}. Then for each $n \geq 0$, polynomial $H_{n}(c+x)$ has degree $n$ and
the sign of its leading coefficient is $(-1)^{n}$. 
\end{lem}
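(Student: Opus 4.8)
The plan is to read off the top-degree behavior of $H_n$ directly from the exponential generating function $\sum_{n\ge0}H_n(s)z^n/n!=h(z)e^{sf(z)}$. Expanding $e^{sf(z)}=\sum_{k\ge0}s^k f(z)^k/k!$ and using $f(0)=0$, $f'(0)\ne0$, one sees that $f(z)^k$ has lowest-order term $(f'(0))^k z^k$; consequently the coefficient of $z^n$ in $h(z)e^{sf(z)}$ is a polynomial in $s$ of degree exactly $n$ (no power $s^k$ with $k>n$ can contribute to $z^n$), and the coefficient of $s^n z^n$ in $h(z)e^{sf(z)}$ equals $h(0)(f'(0))^n/n!$. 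Multiplying by $n!$ shows that $H_n(s)$ has degree $n$ with leading coefficient $h(0)(f'(0))^n$. Equivalently, if $[h,f]=[a_{n,k}]$ is the exponential Riordan matrix for the sequence, then $a_{n,n}=h(0)(f'(0))^n$.

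Next, since the substitution $s=c+x$ leaves the coefficient of the highest power unchanged, $H_n(c+x)$ is a degree-$n$ polynomial in $x$ with the same leading coefficient $h(0)(f'(0))^n$, so it remains only to pin down the sign of this quantity. From \eqref{eq:hzdef} we compute $h(0)=z_1^{p+p^{*}}z_2^{q+q^{*}}\prod_{i=1}^m\alpha_i^{p_i}$, which is positive because $z_1,z_2>0$ and $\alpha_i>z_1^2>0$. Differentiating $f$ gives $f'(0)=-\tfrac{1}{z_1}-\tfrac{1}{z_2}-\tfrac{1}{z_1}-\tfrac{1}{z_2}=-\tfrac{2(z_1+z_2)}{z_1 z_2}<0$, so $(f'(0))^n$ has sign $(-1)^n$. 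Hence $h(0)(f'(0))^n$ has sign $(-1)^n$, which is the assertion.

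There is no real obstacle here: the content is just the standard fact that a Sheffer sequence for $(h,f)$ consists of polynomials of the correct degree with leading coefficient $h(0)(f'(0))^n$, specialized to the present $h$ and $f$. The only points meriting a line of care are (a) that no power of $s$ beyond $s^n$ survives in the $z^n$-coefficient, which is exactly the statement $f(0)=0$, and (b) that the top coefficient is nonzero, which follows from the Sheffer hypotheses $h(0)\ne0$, $f'(0)\ne0$ — both made explicit by the positivity/negativity computations above.
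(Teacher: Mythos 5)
Your proof is correct, but it takes a genuinely different route from the paper's. You read the leading coefficient of $H_n(s)$ directly off the exponential generating function: since $f(0)=0$ and $f(z)=f'(0)z+\mathcal{O}(z^2)$, only $s^kf(z)^k/k!$ with $k\le n$ contributes to the $z^n$-coefficient, and the $s^nz^n$-coefficient is $h(0)(f'(0))^n/n!$; the explicit computations $h(0)=z_1^{p+p^*}z_2^{q+q^*}\prod_i\alpha_i^{p_i}>0$ and $f'(0)=-2(z_1+z_2)/(z_1z_2)<0$ then give the sign $(-1)^n$. This is the standard fact that a Sheffer sequence for $(h,f)$ has leading coefficients $h(0)(f'(0))^n$, and it is shorter and more general than what the paper does. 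The paper instead invokes the symmetry $H_n(c-x)=(-1)^nH_n(c+x)$ to reduce to showing that $H_n(c-x)$ has positive leading coefficient, and then expands the generating function of $H_n(c-x)$ as a product of binomial series $(1\pm z/z_j)^{\pm x}$, observing that the $z^k$-coefficient of each factor is a degree-$k$ polynomial in $x$ with positive leading coefficient, so that the $z^n$-coefficient of the full product inherits degree $n$ and a leading sign governed by $h(0)>0$. The paper's route has the side benefit of reusing the symmetry identity that drives the rest of the zero count, and of exhibiting positivity of the intermediate coefficients, while yours isolates the leading coefficient in closed form and makes the nonvanishing hypotheses $h(0)\ne0$, $f'(0)\ne0$ do all the work; both are complete and correct.
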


\begin{proof}
Since 
\[
H_{n}(c-x)=(-1)^{n}H_{n}(c+x),
\]
it suffices to prove that $H_{n}(c-x)$ has degree $n$, and that its leading
coefficient is positive. The generating function for $H_{n}(c-x)$ is given by
is 
\[
h(z)\frac{(z_{1}-z)^{c}(z_{2}-z)^{c}}{(z_{1}+z)^{c}(z_{2}+z)^{c}}(1-z/z_{1})^{-x}(1-z/z_{2})^{-x}(1+z/z_{1})^{x}(1+z/z_{2})^{x}.
\]
For each $k \in \mathbb{N}$, the coefficient of $z^k$ in the binomial expansion
of each factor $(1-z/z_{1})^{-x}$, $(1-z/z_{2})^{-x}$, $(1+z/z_{1})^{x}$,
and $(1+z/z_{2})^{x}$ is a polynomial of degree $k$ in $x$ with
a positive leading coefficient. Thus, given an $n \in \mathbb{N}$, the coefficient of $z^n$ in the product 
\[
(1-z/z_{1})^{-x}(1-z/z_{2})^{-x}(1+z/z_{1})^{x}(1+z/z_{2})^{x}
\]
is of the form
\[
\sum_{i+j+k+\ell=n} p_i(x)p_j(x)p_k(x)p_{\ell}(x),
\]
where each factor of each summand -- and hence the entire expression -- has a positive leading coefficient, and degree equal to its index.
We expand 
\[
h(z)\frac{(z_{1}-z)^{c}(z_{2}-z)^{c}}{(z_{1}+z)^{c}(z_{2}+z)^{c}}
\]
as a power series in $z$ (with constant coefficients) and deduce
that $H_{n}(c-x)$ has degree $n$, and the sign of its leading coefficient
is the same as the sign of the constant coefficient of this series which
is $h(0)>0$. 
\end{proof}
The final piece in accounting for all of the zeros of $H_n(s)$ is provided by the fact (to be proven in short order) that the total number of real zeros of $H_{n}(s)$ and
those on $c+it$ (except the possible zero at $c$) is at least 
\begin{equation}
\begin{cases}
n-2 & \text{ if }2\mid n\\
n-3 & \text{ if }2\nmid n
\end{cases}.\label{eq:lowerboundimzeros}
\end{equation}
 Assuming this fact, we now provide an argument to complete the proof
of Theorem \ref{thm:maintheorem}. If $n$ is odd, then
we let $x=0$ in $H_{n}(c-x)=(-1)^{n}H_{n}(c+x)$ to conclude that
$c$ is a zero of $H_{n}(s)$. It thus remains to account for the two possible missing zeros of $H_n(s)$ regardless of the parity of $n$. Since the degree of $H_{n}(s)$ is
$n$, and the zeros of $H_{n}(s)$ are symmetric about the real line
and the line $c+it$, it suffices to show that the possible two remaining
zeros of $H_{n}(s)$ are not real. Note that $H_{n}(c+x)$ has opposite
signs at the endpoints of each $J_{k}$ (as defined in \eqref{eq:J_kint}). Hence,
 $H_{n}(c+x)$ must have exactly one zero on each $J_{k}$ and consequently
the two remaining zeros cannot lie on any $J_{k}$. Since on the set
\[
(0,c+p+\delta)\backslash\bigcup_{0<k<c+p+1-\delta}J_{k}
\]
the second term of expression \eqref{eq:hnc+x} dominates the first, $H_{n}(c+x)$
does not have zero there. Moreover, it follows from $h_{p}>0$ and the asymptotic expression in  \eqref{eq:hnc+x} that the sign $H_{n}(c+x)$ at $x=c+p+\delta$ is $(-1)^{n}$.
By Lemma \ref{lem:degree}, this is the same as the sign of $\lim_{x\rightarrow\infty}H_{n}(c+x)$, and we conclude that $H_{n}(c+x)$ has no zero on $[c+p+\delta,\infty)$. It follows that the remaining two possible zeros must lie on the line $\Re z=c$, completing the proof of Theorem \ref{thm:maintheorem}. 
\begin{rem}
\label{rem:signat0}In the case $c+p\in\mathbb{Z}^{+}$, \eqref{eq:hnc+x}
implies that $H_{n}(c+x)$ is nonzero on $(0,1-\delta)$ and its sign
is $(-1)^{n+c+p}$ there.
\end{rem}
 We now present the proof of the zero count of $H_n(s)$ claimed in expression \eqref{eq:lowerboundimzeros} above. Since a lower bound for the number of real zeros of $H_{n}(s)$ is provided by Lemma \ref{lem:realzeros}, it remains to count the number of zeros of $H_{n}(c+int)$ on $|t|\in(0,T)$. We recall that for
$t\in(0,T)$, $\pi H_{n}(c+int)$ is the imaginary part of $-i$ times
the real part of $p(\zeta(t))$. It therefore suffices to compute the change in the argument of $p(\zeta(t))$ in order to get a lower bound on the zero count of $H_n(s)$ on the line $\Re z=c$. We proceed by case analysis, depending on whether $T=T_2$ or $T=T_1$ (c.f. equation \eqref{eq:Tdefn}).

\subsection*{Case $T=T_{2}$}

If $T=T_{2}$ and $p(\zeta(T_{1}))\ne0$, then for some $|C|<3\pi/2+o(1)$
and $c_{2}\in\mathbb{R}^{+}$ 
\begin{align}
\Delta_{e^{-ln^{4}n}/n\ll t<T-c_{2}/n^{2/3}}\arg p(\zeta(t)) & =\frac{1}{2}\Delta\arg_{e^{-ln^{4}n}/n\ll t<T-c_{2}/n^{2/3}}g(\zeta(t))+\frac{|c+p|\pi}{2}+\eta+C\nonumber \\
 & =\frac{1}{2}\left(\Delta_{\gamma_{1}}\arg g(\zeta)+(c+p+5/2)\pi\right)+\frac{|c+p|\pi}{2}+\eta+C\nonumber \\
 & =-\frac{n\pi}{2}+\frac{c+p+|c+p|}{2}\pi-\frac{\pi}{4}+\eta+C,\label{eq:changearg}
\end{align}
where $\eta$ is defined as in equation \eqref{eq:etadef} in Lemma \ref{lem:changeargsmallt}.
In the case $c+p<0$, the equation above implies that the number of
zeros of $H_{n}(c+int)$ on $(0,T)$ is at least 
\[
\left\lfloor \frac{n}{2}+\frac{3}{4}-\frac{C}{\pi}\right\rfloor .
\]
It follows from $|C|<3\pi/2+o(1)$ that $H_{n}(s)$ has at least 
\[
\begin{cases}
n-3 & \text{ if }2\nmid n\\
n-2 & \text{ if }2\mid n
\end{cases}
\]
nonreal zeros on the line $\Re s=c+it$.

On the other hand, if $c+p\ge0$, then the number of zeros of $H_{n}(c+int)$
on $(0,T)$ is at least 
\[
\left\lfloor \frac{n}{2}-(c+p)+\frac{\pi}{4}-\frac{\eta+C}{\pi}\right\rfloor .
\]
We conclude from Lemma \pageref{lem:realzeros} that the total number
of real zeros and those on $c+it$ (except the possible zero at $c$)
is at least 
\begin{equation}
2\left\lfloor \frac{n}{2}-(c+p)+\frac{1}{4}-\frac{\eta+C}{\pi}\right\rfloor +2\left\lceil c+p\right\rceil, \label{eq:lowerboundzeros-1}
\end{equation}
where $c+p=2k+\alpha/\pi-1/2$.

If $-\pi<\alpha<-\pi/2$, then $\eta=-\alpha-\pi$. Consequently,
\[
-(c+p)-\frac{\eta+C}{\pi}+\frac{1}{4}=-2k+\frac{7}{4}-\frac{C}{\pi},
\]
and the expression \eqref{eq:lowerboundzeros-1} is at least 
\[
\begin{cases}
2\left(\left\lfloor \frac{n}{2}\right\rfloor -2k\right)+2(2k-1)=n-2 & \text{ if }2 \mid n\\
2\left(\left\lfloor \frac{n}{2}\right\rfloor -2k\right)+2(2k-1)=n-3 & \text{ if }2\nmid n
\end{cases}.
\]
If $\pi/2<\alpha\le\pi$, then $\eta=-\alpha+\pi$ and 
\[
-(c+p)-\frac{\eta+C}{\pi}+\frac{1}{4}=-2k-\frac{1}{4}-\frac{C}{\pi},
\]
from which we see that the expression in \eqref{eq:lowerboundzeros-1} is at least 
\[
\begin{cases}
2\left(\left\lfloor \frac{n}{2}\right\rfloor -2k-2\right)+2(2k+1)=n-2 & \text{ if }2\mid n\\
2\left(\left\lfloor \frac{n}{2}\right\rfloor -2k-2\right)+2(2k+1)=n-3 & \text{ if }2\nmid n
\end{cases}.
\]
If $-\pi/2<\alpha<\pi/2$, then $\eta=-\alpha$ and 
\[
-(c+p)-\frac{\eta+C}{\pi}+\frac{1}{4}=-2k+\frac{3}{4}-\frac{C}{\pi}.
\]
Computing the expression in \eqref{eq:lowerboundzeros-1} again we find that it is at least 
\[
\begin{cases}
2\left(\left\lfloor \frac{n}{2}\right\rfloor -2k-1\right)+4k=n-2 & \text{ if }2\mid n\\
2\left(\left\lfloor \frac{n}{2}\right\rfloor -2k-1\right)+4k=n-3 & \text{ if }2\nmid n
\end{cases}.
\]

If $\alpha=\pi/2$, then $\eta=0$ and 
\[
\Delta_{e^{-ln^{4}n}/n\ll t<T-c_{2}/n^{2/3}}\arg p(\zeta(t))=-\frac{n\pi}{2}+(c+p)\pi-\frac{\pi}{4}+C,
\]
and the expression in  \eqref{eq:lowerboundzeros-1} computes to be at least 
\[
\begin{cases}
2\left(\left\lfloor \frac{n}{2}\right\rfloor -2k-2\right)+4k=n-4 & \text{ if }2\mid n\\
2\left(\left\lfloor \frac{n}{2}\right\rfloor -2k-1\right)+4k=n-3 & \text{ if }2\nmid n.
\end{cases}
\]
If $\alpha=-\pi/2$, then $\eta=0$ and 
\[
\Delta_{e^{-ln^{4}n}/n\ll t<T-c_{2}/n^{2/3}}\arg p(\zeta(t))=-\frac{n\pi}{2}+(2k-1)\pi-\frac{\pi}{4}+C.
\]
In this case we find that the expression in \eqref{eq:lowerboundzeros-1} is at least 
\[
\begin{cases}
2\left(\left\lfloor \frac{n}{2}\right\rfloor -2k-1\right)+2(2k-1)=n-4 & \text{ if }2 \mid n\\
2\left(\left\lfloor \frac{n}{2}\right\rfloor -2k\right)+2(2k-1)=n-3 & \text{ if } 2\nmid n.
\end{cases}
\]
The reader will note that if $\alpha=\pm\pi/2$
 and $2 \mid n$, we need to find two more zeros in order to increase the the lower bound we have thus far, i.e. $n-4 $, to the claimed lower bound of $n-2$. Suppose thus that $2 \mid n$. The identity
 \[
H_{n}(c-x)=(-1)^{n}H_{n}(c+x)
\]
implies that if $c$ is a zero of $H_{n}(z)$, then it is
a double zero. On the other hand, if $c$ is not a zero of this polynomial,
then from Remark \ref{rem:signat0} we conclude that the sign of $H_{n}(c)$
is $(-1)^{c+p}$ and consequently 
\[
\lim_{t\rightarrow0}\Arg p(\zeta(t))=(-1)^{c+p}\frac{\pi}{2}.
\]
Moreover, Proposition \ref{prop:asympsmallt} yields that for $t\asymp e^{-\ln^{4}n}/n$,
\[
\Arg p(\zeta(t))=\Arg(i\sin(c+p+1+int))+o(1)=\begin{cases}
o(1) & \text{ if }2\mid c+p\\
\pm\pi+o(1) & \text{ if } 2 \nmid c+p\\
\end{cases}.
\]
This implies that 
\begin{equation}\label{eq:changeargalphapm}
\Delta_{0<t\ll e^{-ln^{4}n}/n}\arg p(\zeta(t))=-\frac{\pi}{2}\text{ or }\frac{3\pi}{2}.
\end{equation}
If the change of argument in \eqref{eq:changeargalphapm} is $3\pi/2$, then we have
at least two zeros of $H_{n}(c\pm int)$ on the range $0<t\ll e^{-ln^{4}n}/n$,
since $\pi H_{n}(c+int)$ is the imaginary part of $p(\zeta(t))$.
If the change of argument in \eqref{eq:changeargalphapm} is $-\pi/2$, we deduce from equation \eqref{eq:changearg} that
\[
\Delta_{0<t<T-c_{2}/n^{2/3}}\arg p(\zeta(t))=-\frac{n\pi}{2}+(c+p)\pi-\frac{3\pi}{4}+C.
\]
Thus, the number of real zeros of $H_{n}(s)$ and those on $c+it$
(except the possible zero at $c$) is at least 
\[
2\left(\frac{n}{2}-2k-1\right)+4k=n-2
\]
when $\alpha=\pi/2$, and at least 
\[
2\left(\frac{n}{2}-2k\right)+2(2k-1)=n-2
\]
when $\alpha=-\pi/2$. This complete the case $\alpha=\pm\pi/2$ when
$n$ is even.


If $T=T_{2}$ and $p(\zeta(T_{1}))=0$, then for some $c_{2}\in\mathbb{R}^{+}$
and small $\xi>0$, the number of real zeros of $H_{n}(c+int)$ on
$(0,T)\backslash\{T_{1}\}$ is at least 
\begin{align*}
 & \left\lfloor \frac{\Delta_{e^{-ln^{4}n}/n\ll t<T_{1}-\xi/n}\arg p(\zeta(t))}{\pi}\right\rfloor +\left\lfloor \frac{\Delta_{T_{1}+\xi/n<t<T-c_{2}/n^{2/3}}\arg p(\zeta(t))}{\pi}\right\rfloor \\
\ge & \left\lfloor \frac{\Delta_{e^{-ln^{4}n}/n\ll t<T_{1}-\xi/n}\arg p(\zeta(t))}{\pi}+\frac{\Delta_{T_{1}+\xi/n<t<T-c_{2}/n^{2/3}}\arg p(\zeta(t))}{\pi}\right\rfloor -1.
\end{align*}
Counting $T_{1}$ as an additional zero of $H_{n}(1/2+int)$ on $(0,T)$,
we obtain the same number of zeros of this polynomial as in the case
$p(\zeta(T_{1}))\ne0$. 

\subsection*{Case $T=T_{1}$}

 We conclude from Lemma \pageref{lem:changeargsmallt} that for some
$|C|<\pi/2+o(1)$ and $c_{2}\in\mathbb{R}^{+}$ 
\begin{align*}
\Delta_{e^{-ln^{4}n}/n\ll t<T-c_{2}/n^{2/3}}\arg p(\zeta(t)) & =\frac{1}{2}\Delta\arg_{e^{-ln^{4}n}/n\ll t<T-c_{2}/n^{2/3}}g(\zeta(t))+\frac{|c+p|\pi}{2}+\eta+C\\
 & =\frac{1}{2}\left(\Delta_{\gamma_{1}}\arg g(\zeta)+(c+p+1)\pi\right)+\frac{|c+p|\pi}{2}+\eta+C\\
 & =-\frac{n\pi}{2}+\frac{c+p+|c+p|}{2}\pi+\eta+C.
\end{align*}
We compare the last expression with the one in equation \eqref{eq:changearg} and conclude this case, as well as the proof of Theorem \ref{thm:maintheorem}.

\section{The limiting zero distribution density function}\label{sec:limitdist}
While we have found the zero locus of the cognate sequences under investigation, we can extract further information about the limiting behavior of the zeros in terms of their distribution. We do so by compute the limiting probability density function of the zeros of $H_{n}(c+int)$ on $t\in(0,T)$. To this end, for each $x\in(0,T)$
and $\epsilon>0$, we let $N_{n,\epsilon}(x)$ denote the number of
zeros of $H_{n}(c+int)$ on the interval $t\in(x,x+\epsilon)$. It
follows that the limiting probability density function at $x\in(0,T)$
is given by 
\[
\lim_{\epsilon\rightarrow0}\frac{1}{\epsilon}\lim_{n\rightarrow\infty}\frac{N_{n,\epsilon}(x)}{n}.
\]
We note that for any $x\in(0,T)$ and $x\ne T_{1}$ (if $T=T_{2}$),
\[
p^{2}(\zeta(t))\sim g(\zeta(t))=\frac{2\pi\psi^{2}(\zeta)e^{-2n\phi(\zeta,t)}}{n\phi_{z^{2}}(\zeta,t)}
\]
uniformly on $t\in(x,x+\epsilon)$, and consequently 
\begin{align*}
\Delta\arg_{x<t<x+\epsilon}p(\zeta(t)) & =\frac{1}{2}\Delta\arg_{x<t<x+\epsilon}g(\zeta(t))\\
 & =-n\Delta\Im_{x<t<x+\epsilon}\phi(\zeta,t)+\mathcal{O}(\epsilon).
\end{align*}
It is immediate from the Taylor expansion of $\phi(\zeta,\cdot)$
about $x$ that 
\[
\left.\phi(\zeta,t)\right|_{t=x}^{t=x+\epsilon}=\left.\frac{d\phi(\zeta,t)}{dt}\right|_{t=x}\epsilon+\mathcal{O}(\epsilon^{2}),
\]
where 
\begin{align*}
\left.\frac{d\phi}{dt}\right|_{t=x} & =\left.\frac{\partial\phi}{\partial\zeta}\right|_{t=x}\left.\frac{d\zeta}{dt}\right|_{t=x}+\left.\frac{\partial\phi}{\partial t}\right|_{t=x}\\
 & =\left.\frac{\partial\phi}{\partial t}\right|_{t=x}\\
 & =-i\left(\Log(z_{1}-z)+\Log(z_{2}-z)-\Log(z_{1}+z)-\Log(z_{2}+z)\right).
\end{align*}
Thus, using the fact that 
\[
\pi H_{n}(c+int)=\Im(-i\Re(p(\zeta(t)))),
\]
we conclude that 
\begin{align}
\lim_{\epsilon\rightarrow0}\frac{1}{\epsilon}\lim_{n\rightarrow\infty}\frac{N_{n,\epsilon}(x)}{n} & =\lim_{\epsilon\rightarrow0}\frac{1}{\epsilon}\lim_{n\rightarrow\infty}\frac{\left|\Delta\arg_{x<t<x+\epsilon}p(\zeta(t))\right|}{\pi n}.\nonumber \\
 & =\frac{1}{\pi}\ln\left|\frac{(z_{1}+\zeta(x))(z_{2}+\zeta(x))}{(z_{1}-\zeta(x))(z_{2}-\zeta(x))}\right|.\label{eq:limitmeasure}
\end{align}

In the case $x=T_{1}$ when $T=T_{2}$, we note from the previous
section that the number of zeros of $H_{n}(c+int)$ on $t\in(T_{1},T_{1}+\xi/n)$,
for small $\xi>0$, is $\mathcal{O}(1)$. Since 
\[
\Delta\arg_{T_{1}+\xi/n<t<T_{1}+\epsilon}p(\zeta(t))=\frac{1}{2}\Delta\arg_{T_{1}+\xi/n<t<T_{1}+\epsilon}g(\zeta(t))+C
\]
for $|C|<\pi/2+o(1)$, the same argument above also shows that \eqref{eq:limitmeasure}
holds for $x=T_{1}$ as well.

\begin{figure}[h]
\begin{centering}
\includegraphics[scale=0.5]{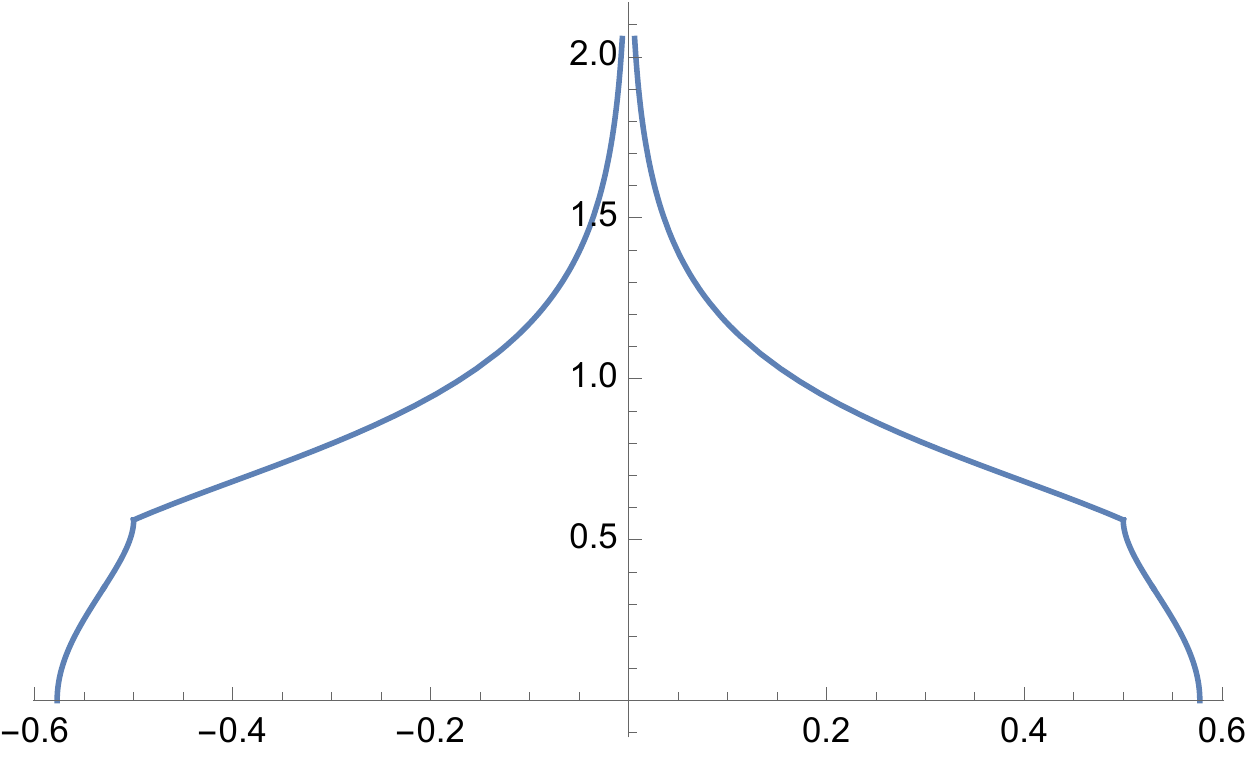} \includegraphics[scale=0.5]{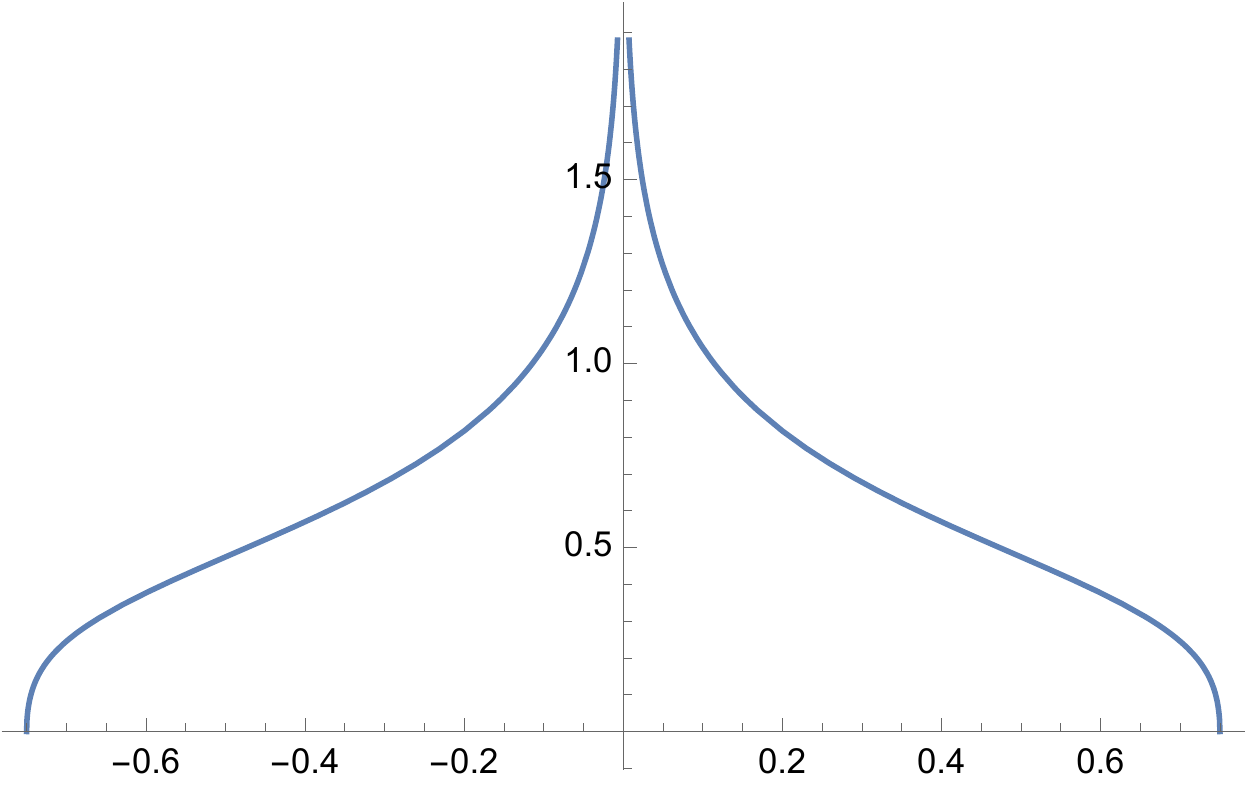} 
\par\end{centering}
\caption{\label{fig:meadist}Limiting probability density function for $(z_{1},z_{2})=(1,3)$
(left) and $(1,7)$ (right)}
\end{figure}

\end{document}